\documentclass[11pt]{article}
\usepackage{amsmath,amssymb,amsfonts,amsxtra}
\usepackage{amsthm, bm}
\usepackage{graphicx,psfrag}
\usepackage{booktabs}
\usepackage{subfigure}
\usepackage{multirow}
\usepackage{url}
\usepackage[usenames]{color}
\usepackage[colorlinks,linktocpage,linkcolor=blue]{hyperref}
\usepackage[margin=1.1in]{geometry}
\allowdisplaybreaks

\definecolor{darkred}{rgb}{.7,0,0}

\definecolor{green}{rgb}{0,0.7,0}

\newtheoremstyle{thmm}{1.5ex plus 1ex minus .2ex}{1.5ex plus 1ex minus.2ex}{\rmfamily}{}{\bfseries}{}{1em}{} \theoremstyle{thmm}
\newtheorem{theorem}{Theorem}[section]
\newtheorem{lemma}{Lemma}[section]
\newtheorem{assumption}{Assumption}[section]

\newtheorem{remark}{Remark}[section]
\renewenvironment{proof}[1][Proof]{\noindent\textit{#1. }
}{\hfill$\square$}

\def\R{\mathbb{R}}

\def\d{{\mathrm d}}
\def\Omega{{\varOmega}}

\definecolor{mygray}{rgb}{0.9,0.9,0.9}

\definecolor{darkred}{rgb}{0.55,0,0.0}

\DeclareMathOperator*{\esssup}{ess\,sup}

\title{\Large\bf Error estimates for the scalar auxiliary variable (SAV) scheme to the Cahn-Hilliard equation}
\date{}

\graphicspath{{images/}}	
\begin{document}


\author{\normalsize 
Shu Ma\thanks{Department of Mathematics, City University of Hong Kong, 83 Tat Chee Avenue, Kowloon, Hong Kong, P.R. China.
Email address: shuma2@cityu.edu.hk.}
\and 
Weifeng Qiu\thanks{Department of Mathematics, City University of Hong Kong, 83 Tat Chee Avenue, Kowloon, Hong Kong, P.R. China. 
Email address: weifeqiu@cityu.edu.hk.
}
\and
Xiaofeng Yang\thanks{Department of Mathematics, University of South Carolina, Columbia, SC, 29208, USA. Email address: xfyang@math.sc.edu.}
}

\date{}




\maketitle
\vspace{-10pt}

\begin{abstract} The optimal error estimate that depending only on the polynomial degree of $ \varepsilon^{-1}$ is established for the temporal semi-discrete scheme of the Cahn-Hilliard equation, which is based on the scalar auxiliary variable (SAV) formulation.
The key to our analysis is to convert the structure of the SAV time-stepping scheme back to a form compatible with the original format of the Cahn-Hilliard equation,  which makes it feasible to use spectral estimates to handle the nonlinear term.
Based on the transformation of the SAV numerical scheme, the optimal error estimate for the temporal semi-discrete scheme which depends only on the low polynomial  order of $\varepsilon^{-1}$ instead of the exponential order, is derived by using mathematical induction, spectral arguments, and the superconvergence properties of some nonlinear terms.
Numerical examples are provided to illustrate the discrete energy decay property and validate our theoretical convergence analysis.

\noindent{\bf Key words:}$\,\,\,$  Cahn-Hilliard equation, SAV formulation, energy decay, spectral estimates, polynomial order, error estimates.

\end{abstract}



\section{Introduction}\label{sec:intr}
In this paper we consider the initial boundary value problem for the  Cahn-Hilliard (CH) phase field equation
\begin{subequations}\label{pde-CH}
\begin{alignat}{2} 
\partial_t u & = \Delta \big( - \varepsilon \Delta u + \frac{1}{\varepsilon} f(u) \big) &&\qquad \mbox{in}\,\,\,\Omega\times(0,T], \label{pde-CH-u}\\ 
\partial_{\bm n} u & = \partial_{\bm n} \big ( - \varepsilon \Delta u + \frac{1}{\varepsilon} f(u) \big) = 0 && \qquad \mbox{on}\,\,\, \partial \Omega\times(0,T],\\
u(\cdot, 0) & = u_0, &&\qquad\mbox{in}\,\,\,\Omega, 
\end{alignat} 
\end{subequations} 
where $\Omega \subset R^d$, $d = 2, 3$ is a bounded domain, ${\bm n}$ is the
outward normal, $\varepsilon$ is a small parameter,  and $f$ is the derivative of a non-negative potential function $F$ with two local minima, i.e., $f = F'$. For instance, the Ginzburg-Landau energy function
$$
F(v) = \frac14 (v^2 - 1)^2 \quad \mbox{and} \quad f(v) = v^3 - v. 
$$
In view of its wide application as a phase field model \cite{cahn1958free, yue2004diffuse}, many numerical methods and analyses have been developed for approximating the Cahn-Hilliard equation \eqref{pde-CH}. On the one hand, most of these works have been performed for the Cahn-Hilliard equation with  fixed $\varepsilon>0$.
As pointed out in \cite{barrett1995error, du1991numerical, elliott1989nonconforming, elliott1989second}, error estimates using the direct Gronwall inequality argument yield a constant factor $e^{T/\varepsilon}$, and as a result the error grows exponentially as $\varepsilon \to 0$. Such an estimate is clearly not useful for very small $\varepsilon$, especially in solving the problem of whether the computed numerical interface converges to the original sharp interface of the Hele-Shaw problem when $\varepsilon \to 0$, see \cite{chen1996global, pego1989front} for details. 
To overcome this difficulty, Feng and Prohl \cite{feng2004error} first established a priori error estimates with polynomial dependence on $\varepsilon^{-1}$ for the time-discrete format of the Cahn-Hilliard equation.
Then, Feng and Wu obtained a posteriori error estimates with polynomial-order of $\varepsilon^{-1}$ in 
\cite{feng2008posteriori}  for the same time-discrete methods.
The main idea of the polynomial-order error estimates of $\varepsilon^{-1}$ is to use the spectral estimates given by Alikakos and Fusco \cite{alikakos1993spectrum} and Chen \cite{chen1994spectrum} for the linearized Cahn-Hilliard operator to handle the nonlinear term in the error analysis. 
After this, spectral estimates were frequently used to eliminate the exponential dependence on  $\varepsilon^{-1}$ in the error analyses of other numerical methods for the  Cahn-Hilliard  equations (see \cite{bartels2011error, feng2007fully, feng2016analysis, prohl2005numerical, wang2018efficient} and references therein) and the related phase field equations, including the Allen-Cahn equations in \cite{akrivis2022error, bartels2011error, bartels2011quasi, bartels2011robust, feng2015analysis, feng2003numerical, feng2004analysis,  feng2005posteriori}, the Ginzburg-Landau equations in \cite{bartels2005robust}, and the phase field models with nonlinear constitutive laws in \cite{eck2010error}.

On the other hand, it is wellknow that the Cahn-Hilliard equation \eqref{pde-CH} is the $H^{-1}$-gradient flow of the energy functional 
\begin{align}
E[u] := \int_{\Omega} \Big( \frac{\varepsilon}{2} |\nabla u |^2 + \frac{1}{\varepsilon} F(u) \Big) \, \d x.
\end{align}
As a result, the solution of the Cahn-Hilliard equation has decaying energy. Indeed, testing \eqref{pde-CH} 
by $ - \varepsilon \Delta u + \frac{1}{\varepsilon} f(u)$ yields
\begin{align}
E[u(\cdot, t_2)] - E[u(\cdot, t_1)] = - \int_{t_1}^{t_2} \|\partial_t u \|_{H^{-1}}^2 \, \d t \le 0,
\end{align}
where $0 \le t_1 \le t_2 \le T$ and $\|\partial_t u\|_{H^{-1}} = \|\nabla w\|$ with $w = - \varepsilon \Delta u + \frac{1}{\varepsilon} f(u)$.

Accordingly, great efforts have been devoted to the construction of efficient and accurate numerical methods that preserve the energy decay properties at the discrete level. In particular, for those widely used linear time-stepping methods with energy decay, including stabilized semi-implicit schemes \cite{cai2017error, cai2018error, shen2010numerical}, invariant energy quadratization (IEQ) methods \cite{yang2016linear, yang2017efficient, yang2017linear}, and the scalar auxiliary variable (SAV) approach in \cite{shen2018convergence, shen2018scalar} and \cite{akrivis2019energy}. 
Optimal error estimates of these time-stepping schemes for the Cahn-Hilliard equations with fixed $\varepsilon^{-1}$ are now well developed. As far as the error analysis is concerned, the main difficulty that remains with these methods is how to establish error bounds that depend only on the polynomial order of $\varepsilon^{-1}$ rather than the exponential order for small $\varepsilon \to 0$.
This is due to the fact that compared to Feng's previous work \cite{feng2007fully,feng2016analysis, feng2004error, feng2008posteriori, prohl2005numerical}, the numerical methods based on the IEQ/SAV formulation break the standard structure of the nonlinear term of the Cahn-Hilliard equation, which is crucial to the utilization of the spectral arguments. This makes the spectral estimates ineffective in estimating errors for the IEQ/SAV approach.
To the best of our knowledge, the optimal error estimates depending only on the polynomial order  of $\varepsilon^{-1}$ for the IEQ/SAV methods to the Cahn-Hilliard equation remains open.

The objective of this paper is to establish error bounds which depend on $\varepsilon^{-1}$ only in low polynomial order for a semi-discrete methods based on the SAV formulation of the Cahn-Hilliard equation. The SAV reformulation of the Cahn-Hilliard equation was introduced in \cite{shen2018scalar,shen2019new} as an enhanced version of the invariant energy quadratization (IEQ) approach \cite{yang2016linear, yang2017efficient, yang2017linear, yang2017numerical}, for developing energy-decay methods at the discrete level. By reconstructing the system based on the SAV reformulation, we obtain a time semi-discrete scheme, which is linear and easy-to-implement.
The SAV formulation introduces new difficulties to the error analysis for the Cahn-Hilliard equation due to the presence of a new scalar $r$ in the nonlinear part (see equation \eqref{SAV-CH-w}), which  alters the structure of the original Cahn-Hilliard equation and makes the spectral argument not directly applicable.
To improve the current error analysis, Zhang and Yang \cite{yang2022error} have recently made a breakthrough in the estimates of the IEQ method for the Allen-Cahn equation. They established optimal error bounds on the polynomial dependence of $\varepsilon^{-1}$ for the IEQ-based numerical schemes.
Inspired by \cite{yang2022error}, we rewrite the format of the SAV scheme using the new scalar variable into a form compatible with the original format of the Cahn-Hilliard equation,  which makes it feasible to use spectral estimates in the error analysis.
However, this structural transformation will accordingly introduces a strong perturbation term that needs to be delicately controlled. Furthermore, unlike the Allen-Cahn equation which is a gradient flow in $L^2(\Omega)$, the Cahn-Hilliard equation is a gradient flow in $H^{-1}(\Omega)$, which makes the analysis for the Cahn-Hilliard equation in this paper more delicate and complicated than that for the Allen-Cahn equation. In our analysis, these difficulties are overcome by combining the following techniques:
\begin{itemize}
\item[(1)] To use the spectral estimates of the linearized Cahn-Hilliard operator to deal with the nonlinear potential term in the error analysis, we reconvert the structure of the SAV scheme into a form compatible with the original Cahn-Hilliard equation \eqref{pde-CH}.

\item[(2)] An inductive argument is used to deal with the difficulties caused by the strong perturbation term that appear in the structural transformation (see equation \eqref{err-eqn}). In particular, error bounds of $\|\nabla e^{i-1}- \nabla e^{i-2}\|$ and $\|e^{i-1}- e^{i-2}\|_{H^{-1}}$ for $i\le n$ need to be established. By integrating them into the estimates of the perturbation terms, the super-convergence characteristics of some of their resulting nonlinear terms will complete the mathematical induction method.

\item[(3)] Given that the solution of the Cahn-Hilliard problem \eqref{pde-CH} preserves the total mass property (i.e. $\frac{\d }{\d t} \int_{\Omega} u(x,t) \, \d x = 0$), which is not possessed by the corresponding Allen-Cahn problem, we can establish its optimal error bounds in the $L^{\infty}(0, T; H^{-1}(\Omega))$-norm with polynomial dependence on $\varepsilon^{-1}$, with the help of an error estimate in the $L^2(0, T; H^1(\Omega))$-norm. This is different from the error estimates in the $L^{\infty}(0, T; L^2(\Omega))$-norm of the Allen-Cahn equation given in \cite{yang2022error}.
\end{itemize}
To the best of our knowledge, this is the first error estimate of the polynomial dependence on $\varepsilon^{-1}$ for the IEQ/SAV-type schemes of the Cahn-Hilliard equation \eqref{pde-CH}.


The rest of this paper is organized as follows. 
In Section \ref{sec-2}, we present the SAV reformulation of the Cahn-Hilliard equation and introduce an equivalent transformation of the SAV time-stepping scheme. 
In Section \ref{sec-3}, we show the properties of energy decay and derive the consistency estimates for the proposed method.
In Section \ref{sec-4}, we present an error estimate of the semi-discrete SAV scheme to derive a convergence rate that does not depend on $\varepsilon^{-1}$ exponentially. The spectrum estimate plays a crucial role in the proof. Finally, in Section \ref{sec-5}, we present a few numerical experiments to validate the theoretical results.

\section{Formulation of the Semi-discrete SAV scheme}\label{sec-2}

In this section, we construct a backward Euler implicit-explicit type temporal semi-discrete numerical scheme based on the SAV reformulation of the CH equation \eqref{pde-CH}, and also present an equivalent formulation  of the SAV scheme.




\subsection{Function spaces}
Let $W^{s, p}(\Omega)$ denote the usual Sobolev spaces, and $H^s(\Omega)$ denote the Hilbert spaces 
$W^{s, 2}(\Omega)$ with norm $\|\cdot\|_{H^s}$.
Let $\|\cdot\|$ and $(\cdot, \cdot)$ represent the $L^2$ norm and $L^2$  inner product, respectively. In addition, define for $p \ge 0$
\begin{align}
H^{-p}(\Omega):=(H^p(\Omega))^*, \quad H_0^{-p}(\Omega):= \{ u \in H^{-p}(\Omega) \,| \,\langle u, 1\rangle_p = 0 \}, 
\end{align}
where $\langle \cdot, \cdot\rangle_p$ stands for the dual product between $H^p(\Omega)$ and $H^{-p}(\Omega)$. 
We denote $L_0^2(\Omega) := H_0^0(\Omega)$. 
For $v \in L_0^2(\Omega)$, let $- \Delta^{-1} v:=v_1 \in H^1(\Omega) \cap L_0^2(\Omega)$, where $v_1$ is the solution to
\begin{align}
-\Delta v_1 = v \, \, \mbox{in} \, \, \Omega, \qquad  \partial_{\bm n} v_1 = 0 \,\, \mbox{on} \,\, \partial \Omega,
\end{align}
and $\|v\|_{-1} := \sqrt{(v, - \Delta^{-1} v)}$.

For  $v \in L_0^2(\Omega) \cap H^1(\Omega)$, we have the following inequality
\begin{align}\label{inq-l2tohm1}
\|v\|^2 = (\nabla v, \nabla (- \Delta)^{-1} v) \le \|\nabla v\| \|v\|_{-1}.
\end{align}

We denote by $C$ generic constant and $C_i$, $\Tilde C_i$, $\Tilde C$, $\kappa_i'$ and $\kappa_i$ specific constants, which are independent of $\tau$, $h$ and $\varepsilon$, but may possibly depend on the domain $\Omega$, $T$ and the constants of Sobolev inequalities.
We use notation $\lesssim$ in the sense that $f \lesssim g$ means that $f \le Cg$ with positive constant $C$ independent of $\tau$, $h$ and $\varepsilon$.

\subsection{The SAV reformulation}

The SAV formulation of the CH equation (cf. \cite{shen2018convergence, shen2018scalar}) introduces a scalar auxiliary variable   
\begin{align}
r = \sqrt{ \mbox{$\int_\Omega $} F(u) \d x + c_0}
\quad\mbox{with}\quad
g(u)=\frac{f(u)}{ \sqrt{ \mbox{$\int_\Omega $} F(u) \d x +c_0}},
\end{align}
with a positive $c_0$ (which guarantees that the function $r$ has a positive lower bound), and reformulate \eqref{pde-CH} as 
\begin{subequations}\label{SAV-CH} 
\begin{alignat}{2} 
\partial_t u  & = \Delta w  &&\qquad \mbox{in}\,\,\,\Omega\times(0,T], \label{SAV-CH-u}
\\
w &= - \varepsilon \Delta u + \frac{1}{\varepsilon} r g(u)  &&\qquad \mbox{in}\,\,\,\Omega\times(0,T], \label{SAV-CH-w}
\\ 
\frac{\d r}{\d t} &= \frac12 \big(g(u), \partial_tu\big) 
&&\qquad \mbox{in}\,\,\,\Omega\times(0,T], \label{SAV-CH-r}\\ 
\partial_{\bm n} u & = \partial_{\bm n} w = 0 && \qquad \mbox{on}\,\,\, \partial \Omega\times(0,T],
\\
u(\cdot, 0) & = u_0 &&\qquad\mbox{in}\,\,\,\Omega, \\
r(0) & = \sqrt{ \mbox{$\int_\Omega $} F(u_0) \d x+c_0}.&&
\end{alignat} 
\end{subequations} 
We define an energy functional with respect to $u$ and $r$:
\begin{align}
E(u, r) = \frac{\varepsilon}{2} \|\nabla u\|^2 + \frac{1}{\varepsilon} r^2,
\end{align}
and taking the $L^2$ inner product of the first equation \eqref{SAV-CH-u} with $w$, of the second equation \eqref{SAV-CH-w} with $\partial_t u$, and of the third equation \eqref{SAV-CH-r}
with $\frac{1}{\varepsilon} 2r$, performing integration by parts and summing up the two obtained equations, we find that
\begin{align}
\frac{\d}{\d t} E(u, r)  = - \|\nabla w\|^2 = - \|\partial_t u\|_{-1}^2 \leq 0.
\end{align}

\subsection{The equivalent formulation of the SAV scheme}\label{SAV-Euler-Reformulation} 
Let $\{t_n\}_{n=0}^{N+1}$ be a uniform partition of $[0,T]$ with the time step size $\tau$, where $N$ is a positive integer and hence $\tau=\frac{T}{N+1}$.
We consider the following temporal semi-discrete SAV scheme for solving the system \eqref{SAV-CH}:
\begin{equation}\label{SAV-Euler-v1} 
\left \{
\begin{aligned}
\frac{u^{n+1} - u^n}{\tau} & = \Delta w^{n + 1}, 
\\
w^{n+1}& = - \varepsilon \Delta u^{n+1} + \frac{1}{\varepsilon} r^{n+1} g(u^n),  \\ 
r^{n+1} - r^n & = \frac12 \big(g(u^n), u^{n+1} - u^n \big),   \\
\partial_{\bm n} u^{n+1} |_{\partial \Omega} &= \partial_{\bm n} w^{n+1} |_{\partial \Omega} = 0, 
\end{aligned} 
\right .
\end{equation} 
with $u^0 = u_0$ and $r^0 = r(0)$ for $n = 0, 1, \dots, N$.

By taking the $L^2$ inner product of the first equation in \eqref{SAV-Euler-v1} with $v = 1$, we have
the following conservation property, which  is important to the error estimates. 
\begin{lemma}
The numerical solution of \eqref{SAV-Euler-v1} satisfies
\begin{align}\label{lem:conserved-u}
\frac{1}{|\Omega|} \int_{\Omega} u^n(x) \, \d x = \frac{1}{|\Omega|} \int_{\Omega} u^0(x) \, \d x, \qquad n = 1, \dots, N,
\end{align}
and the error function $e^n := u(t_n) - u^n$ satisfies 
\begin{align}\label{lem:conserved-e}
\int_{\Omega} e^n \, \d x = 0, \qquad \qquad n = 1, \dots, N.
\end{align}
\end{lemma}
Because of \eqref{lem:conserved-u}-\eqref{lem:conserved-e}, $u^{n+1} - u^n$ and $e^n$ belong to $L_0^2(\Omega)$ such that we can define their $\|\cdot\|_{-1}$ norm. 
Also, the semi-discrete SAV scheme \eqref{SAV-Euler-v1} can be written as
\begin{equation}\label{SAV-Euler} \left \{
\begin{aligned} 
\Delta^{-1} \frac{u^{n+1} - u^n}{\tau} & = - \varepsilon \Delta u^{n+1} + \frac{1}{\varepsilon} r^{n+1} g(u^n),  \\
r^{n+1} - r^n & = \frac12 \big(g(u^n), u^{n+1} - u^n \big), \qquad n = 0, 1, \dots, N.  
\end{aligned} 
\right .
\end{equation} 



In order to avoid the  exponentially dependence of the error bound on $\frac{1}{\varepsilon}$ induced by using the Gronwall inequality, we need to use a spectral estimate of the linearized Cahn-Hilliard operator, which is given in \cite{alikakos1993spectrum, chen1994spectrum, feng2004error} and will be described in Section~\ref{sec-4}.

However, compared with the previous work \cite{feng2004error}, the SAV method \eqref{SAV-Euler-v1} alters the structure of the CH equation such that the spectral argument can not be applied directly. To achieve the ideal error bound, we need to transform the structure into a form compatible with the CH equation \eqref{pde-CH} so that the spectral estimate of the linearized Cahn-Hilliard operator can be used.
To this end, we define $A(v) = \sqrt{\int_{\Omega} F(v) \, \d x + c_0}$, the Gateaux derivatives of $A(v)$ can be 
defined as follows:
\begin{align}
D A(v, w)   & : = \frac12 \Big(\frac{f(v)}{\sqrt{\int_{\Omega} F(v) \, \d x + c_0}}, w \Big) 
= \frac12 \big( g(v), w\big),\\
D^2 A(v, w) & : = \frac12 \frac{\int_{\Omega} f'(v) w^2 \, \d x}{\sqrt{\int_{\Omega} F(v) \, \d x + c_0}} 
- \frac14 \frac{ \big(\int_{\Omega} f(v) w\, \d x \big)^2}{\Big(\int_{\Omega} F(v) \, \d x + c_0\Big)^{\frac32}}.
\end{align}
By Taylor expansion, we derive 
\begin{align}
A(u^i) = A(u^{i-1}) + \frac12 \big( g(u^{i - 1}), u^i - u^{i-1}\big) + \frac12 D^2A(\xi_i\,; u^i - u^{i-1}),
\end{align}
where $\xi_i = \theta u^i + (1 - \theta) u^{i-1}$ with $\theta \in (0, 1)$.
Thus we get
\begin{align*}
\frac12 \big(g(u^{i-1}), u^i - u^{i - 1}\big) = A(u^i) - A(u^{i-1}) -  \frac12 D^2 A (\xi_i\,; u^i - u^{i-1}),
\end{align*}
which together with the second equation in \eqref{SAV-Euler} implies
\begin{align*}
r^i - r^{i - 1} = A(u^i) - A(u^{i-1}) -  \frac12 D^2 A (\xi_i\,; u^i - u^{i-1}).
\end{align*}
After summing up the above equation from $i = 1$ to $n$, we derive
\begin{align*}
r^n - r^0 = A(u^n) - A(u^0) -  \frac12 \sum_{i = 1}^n D^2 A (\xi_i\,; u^i - u^{i-1}).
\end{align*}
Since $r^0 = A(u^0)$, we obtain
\begin{align}\label{sav-refor-rn}
r^n = A(u^n) -  \frac12 \sum_{i = 1}^n D^2 A (\xi_i\,; u^i - u^{i-1}).
\end{align}
Then the SAV scheme \eqref{SAV-Euler} can be written as 
\begin{align}
\Delta^{-1} \frac{u^{n+1} - u^n}{\tau} 
=\, & - \varepsilon \Delta u^{n+1} + \frac{1}{\varepsilon} r^n g(u^n) + \frac{1}{\varepsilon} g(u^n) (r^{n+1} - r^n) \\
=\, & - \varepsilon \Delta u^{n+1} + \frac{1}{\varepsilon} r^n g(u^n) + \frac{1}{2\varepsilon} g(u^n) \big(g(u^n) , u^{n+1} - u^n\big), \notag
\end{align} 
which together with \eqref{sav-refor-rn}  gives
\begin{align}\label{sav-refor-scheme}
\Delta^{-1} \frac{u^{n+1} - u^n}{\tau} 
=\, & - \varepsilon \Delta u^{n+1} + \frac{1}{\varepsilon} f(u^n) - \frac{1}{2\varepsilon} g(u^n) \sum_{i = 1}^n D^2 A(\xi_i\,; u^i - u^{i - 1})
\\
& + \frac{1}{2\varepsilon} g(u^n) \big(g(u^n), u^{n+1} - u^n\big). \notag
\end{align}
The above equation \eqref{sav-refor-scheme} provides an equivalent formulation of the semi-discrete SAV scheme \eqref{SAV-Euler-v1}, which  will be frequently used in the subsequent error analysis.

\section{Energy decay and consistency analysis}\label{sec-3}
In this section, we present several inequalities related to the proposed numerical method. 

\subsection{Assumption and regularity}
Before presenting the detailed numerical analysis, we first make some assumptions. The quartic growth of the Ginzburg-Landau energy function $F(u) = \frac14 (u^2 - 1)^2$ at infinity poses various technical difficulties for the analysis and approximation of CH equations.
Although the CH equation does not satisfy the maximum principle, if the maximum norm of the initial condition $u^0$ is bounded, it has been shown in \cite{caffarelli1995bound} that the maximum norm of the solution of the CH equation for the truncation potential $F(u)$ with quadratic growth rate at infinity is bounded. Therefore, it has been a common practice (cf. \cite{shen2010numerical}) to consider the CH equations with a truncated $F(u)$.
\begin{assumption} \label{ass-f}
We assume that the potential function $F(u)$ whose derivative $f(u) = F'(u)$ satisfies the following condition:
\begin{itemize}
\item[(i)]  $F \in C^4(\R)$, $F(\pm 1) = 0$, and $F >0 $ elsewhere.
\item[(ii)]  $f(\pm 1) = 0$, $f'(\pm 1) > 0$, and there exists a non-negative constant $L$ such that 
\begin{align}\label{ass-f1}
\max_{v \in \R}|f(v)| \le  L, \quad \max_{v \in \R}|f'(v)| \le  L \quad \mbox{and} \quad \max_{v \in \R}|f''(v)| \le  L.
\end{align}
\end{itemize}
\end{assumption}

In order to trace the dependence of the solution on the small parameter $\varepsilon > 0$, we assume that the solution of \eqref{pde-CH} satisfies the following conditions:
\begin{assumption}\label{ass-u0}
Suppose there exist positive $\varepsilon$-independent constants $m_0$ and $\rho_j$ for $j = 1, 2, 3$ such that the solution of \eqref{pde-CH} satisfies
%
\begin{subequations}
\begin{alignat}{2} 
\label{sta-u-a}
&  \frac{1}{|\Omega|} \int_{\Omega} u\, \d x = m_0 \in (-1, 1), 
\\
\label{sta-u-b}
& \esssup_{t \in [0, \infty]} \Big\{ \frac{\varepsilon}{2} \|\nabla u\|^2 + \frac{1}{\varepsilon} \int_{\Omega} F(u) \, \d x  \Big\} 
+ \int_0^\infty \|u_t\|_{-1}^2 \, \d s
\lesssim \varepsilon^{-\rho_1}, 
\\
\label{sta-u-d}
& \int_0^\infty \|u_t\|^2 \, \d s \lesssim \varepsilon^{- \rho_2}, \\
\label{sta-u-e}
& \int_0^\infty \|\Delta^{-1} u_{tt}\|_{-1}^2 \, \d t \lesssim \varepsilon^{-\rho_3}.
\end{alignat}
\end{subequations}
\end{assumption}
\begin{remark}
(a) Note that the conditions (i) and (ii)  are satisfied by
restricting the growth of  $F(v)$ for $|v| \ge M $.  
More precisely, 
for a  given $M \ge 1$, we can replace $F(v) = \frac14 (v^2 - 1)^2$ by a cut-off function $\hat F(v) \in C^4(\R)$ as follows:
\begin{equation} \label{truncate-F}
\hat F(v) = 
\left \{
\begin{aligned}
& ((2M)^2 - 1)2M(v - 2M) + \frac14 ((2M)^2 - 1)^2 && \mbox{for}\,\, v > 2M, \\
& \Phi_+(v) && \mbox{for}\,\, v \in (M, 2M], \\
& \frac14 (v^2 - 1)^2 && \mbox{for}\, \,v \in [-M, M], \\
& \Phi_-(v) && \mbox{for}\,\, v \in [-2M, -M), \\
& -((2M)^2 - 1)2M(v + 2M) + \frac14 ((2M)^2 - 1)^2 && \mbox{for}\, \,v < -2M,
\end{aligned}
\right .
\end{equation}
where $\Phi_+(v)$ and $\Phi_-(v) >0$ elsewhere between $M<|v|< 2M$ and satisfy
the required conditions at $|v| = M$ and $|v| = 2M$, respectively.
Then we replace $f(v) = (v^2 - 1)v$ by $\hat F'(v)$ which is 
\begin{equation}
\hat f(v) = \hat F'(v) = 
\left \{
\begin{aligned}
& ((2M)^2 - 1)2M  && \mbox{for}\,\, v > 2M, \\
& \Phi'_+(v) && \mbox{for}\,\, v \in (M, 2M], \\
& (v^2 - 1)v      && \mbox{for}\, \,v \in [-M, M], \\
& \Phi'_-(v) && \mbox{for}\,\, v \in [-2M, -M), \\
& -((2M)^2 - 1)2M && \mbox{for}\, \,v < -2M.
\end{aligned}
\right .
\end{equation}
In simplicity, we still denote the modified function $\hat F$ by $F$. It is then obvious that there exists $L$ such that \eqref{ass-f1} are satisfied with $f$ replaced by $\hat f$.

(b) 
The transformed SAV scheme \eqref{sav-refor-scheme} introduced a complicated term.
With the condition (ii) in the Assumption~\ref{ass-f}, we can get
\begin{align}
D^2 A(v; w) \lesssim \big(\|f'(v)\|_{L^\infty} + \|f(v)\|^2 \big) \|w\|^2 
\lesssim \|w\|^2,
\end{align}
which will be frequently used to control the difficult term in the error analysis.

(c) Assumption~\ref{ass-u0} can be achieved in many cases. For example,
suppose that $f$ satisfies Assumption \ref{ass-f},  $\partial \Omega$ is of 
class $C^{2, 1}$, $u^0 \in H^3(\Omega)$, and there exist positive $\varepsilon$-independent constants $\sigma_j$ for $j = 1, 2, 3$ such that
\begin{subequations}
\begin{alignat}{2} 
& E[u^0]=\frac{\varepsilon}{2}\|\nabla u^0\|^2 + \frac{1}{\varepsilon} \int_{\Omega} F(u^0)\, \d x \lesssim \varepsilon^{-2\sigma_1},\\
& \|-\varepsilon \Delta u^0 + \frac{1}{\varepsilon} f(u^0)\| \lesssim \varepsilon^{-2\sigma_2},\\
& \|-\varepsilon \Delta u^0 + \frac{1}{\varepsilon} f(u^0)\|_{H^1} \lesssim \varepsilon^{-2\sigma_3}.
\end{alignat}
\end{subequations}
Then the estimates \eqref{sta-u-a}--\eqref{sta-u-e} can be derived 
by standard test function techniques and satisfy: 
\begin{align*}
\rho_1 = 2\sigma_1, \quad 
\rho_2 = \max \{2\sigma_1 + 2, 2\sigma_2 - 1\}\quad \mbox{and} \quad
\rho_3 = \max \{2\sigma_1 + 4, 2\sigma_2 + 1, 2\sigma_3 - 1\}.
\end{align*}
We refer to  \cite{feng2001numerical, feng2004error} for their detailed proof.
\end{remark}

\subsection{Energy decay structure}
In this subsection, we prove the following energy decay property of the numerical solution, which comprise of the first theorem of this paper.  
\begin{theorem}{(energy decay)}\label{THM:Energy-decay}
The scheme \eqref{SAV-Euler-v1} is unconditionally energy stable in the sense that
\begin{align}\label{Energy-decay-un}
E(u^{n+1}, r^{n+1}) - E(u^n, r^n) \le - \frac{1}{\tau}\|u^{n+1} - u^n\|_{-1}^2 \le 0 \quad\mbox{for } n\geq 1.
\end{align}
\end{theorem}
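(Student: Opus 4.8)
The plan is to run the canonical unconditional SAV energy-stability computation, but working directly with the compact $\Delta^{-1}$-form \eqref{SAV-Euler} and testing against the increment $u^{n+1}-u^n$, so that the dissipation emerges immediately in the $\|\cdot\|_{-1}$ norm. First I would record the well-posedness prerequisite: by the conservation property \eqref{lem:conserved-u}, the increment $u^{n+1}-u^n$ belongs to $L_0^2(\Omega)$, so that $\Delta^{-1}(u^{n+1}-u^n)$ and the norm $\|u^{n+1}-u^n\|_{-1}$ are well defined. This is exactly what legitimizes pairing the $\Delta^{-1}$-form equation with $u^{n+1}-u^n$.

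Then I would take the $L^2$ inner product of the first equation of \eqref{SAV-Euler} with $u^{n+1}-u^n$. On the left-hand side, using $\|v\|_{-1}^2=(v,-\Delta^{-1}v)$ and hence $(\Delta^{-1}v,v)=-\|v\|_{-1}^2$, the term collapses to $-\frac{1}{\tau}\|u^{n+1}-u^n\|_{-1}^2$. On the right-hand side I would integrate by parts in the diffusion term (the homogeneous Neumann condition annihilates the boundary contribution), producing $\varepsilon(\nabla u^{n+1},\nabla(u^{n+1}-u^n))$, and I would invoke the second equation of \eqref{SAV-Euler} to substitute the nonlinear coupling $(g(u^n),u^{n+1}-u^n)=2(r^{n+1}-r^n)$, which rewrites the reaction term as $\frac{2}{\varepsilon}r^{n+1}(r^{n+1}-r^n)$.

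Next I would apply the polarization identities $(a,a-b)=\frac12(\|a\|^2-\|b\|^2+\|a-b\|^2)$ to the gradient term and $a(a-b)=\frac12(a^2-b^2+(a-b)^2)$ to the scalar $r$. The differences $\frac{\varepsilon}{2}(\|\nabla u^{n+1}\|^2-\|\nabla u^n\|^2)$ and $\frac{1}{\varepsilon}((r^{n+1})^2-(r^n)^2)$ assemble exactly into $E(u^{n+1},r^{n+1})-E(u^n,r^n)$, while the squared increments furnish the nonnegative remainder. Collecting everything yields the exact identity
\[
E(u^{n+1},r^{n+1})-E(u^n,r^n)=-\frac{1}{\tau}\|u^{n+1}-u^n\|_{-1}^2-\frac{\varepsilon}{2}\|\nabla u^{n+1}-\nabla u^n\|^2-\frac{1}{\varepsilon}(r^{n+1}-r^n)^2,
\]
and dropping the two manifestly nonpositive squared-increment terms gives \eqref{Energy-decay-un}.

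I do not expect a genuine obstacle, as this is the standard SAV stability mechanism; the only points requiring care are bookkeeping ones. The first is ensuring the mean-zero condition so that the $\Delta^{-1}$ inner product and the $\|\cdot\|_{-1}$ norm are meaningful. The second is pairing precisely the equation carrying $\Delta^{-1}$ with the test function $u^{n+1}-u^n$, so that the dissipation appears as $\frac{1}{\tau}\|u^{n+1}-u^n\|_{-1}^2$ rather than as $\tau\|\nabla w^{n+1}\|^2$; one should note that these two quantities coincide, since the first relation of the scheme gives $u^{n+1}-u^n=\tau\Delta w^{n+1}$ and hence $\|u^{n+1}-u^n\|_{-1}^2=\tau^2\|\nabla w^{n+1}\|^2$.
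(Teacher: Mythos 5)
Your proposal is correct and follows essentially the same route as the paper: test the $\Delta^{-1}$-form of the scheme with $u^{n+1}-u^n$, use the $r$-update to convert the nonlinear coupling into $\frac{2}{\varepsilon}r^{n+1}(r^{n+1}-r^n)$, and apply the polarization identities to obtain exactly the identity \eqref{Energy-decay-est1}, from which \eqref{Energy-decay-un} follows by dropping the nonnegative increment terms. The only cosmetic difference is that the paper keeps the two tested equations separate and sums them rather than substituting one into the other.
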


\begin{proof}
Taking the inner product of the first equation in \eqref{SAV-Euler-v1} with $-\Delta^{-1}(u^{n+1} - u^n)$, and of the second equation with $u^{n+1} - u^n$, and multiplying the third equation in \eqref{SAV-Euler-v1} by $\frac{2}{\varepsilon} r^{n+1}$, we derive that
\begin{align} 
& \frac{1}{\tau} \|u^{n+1} - u^n\|_{-1}^2 
+ \frac{\varepsilon}{2} \Big(\|\nabla u^{n+1}\|^2 - \|\nabla u^n\|^2   + \|\nabla u^{n+1} - \nabla u^n\|^2 \Big)\\
& \qquad \qquad + \frac{1}{\varepsilon}r^{n+1} \big( g(u^n), u^{n+1} - u^n \big) = 0,  \notag \\
&\frac{1}{\varepsilon} \Big((r^{n+1})^2 - (r^n)^2 +  (r^{n+1} - r^n)^2 \Big)
= \frac{1}{\varepsilon}r^{n+1} \big( g(u^n), u^{n+1} - u^n \big). \label{energy-r-1}
\end{align}
Taking the summation of the above equations, we get
\begin{align}\label{Energy-decay-est1}
\frac{1}{\tau} \|u^{n+1} - u^n\|_{-1}^2 
+ & \frac{\varepsilon}{2} \Big(\|\nabla u^{n+1}\|^2 - \|\nabla u^n\|^2   + \|\nabla u^{n+1} - \nabla u^n\|^2 \Big)\\
+ & \frac{1}{\varepsilon} \Big((r^{n+1})^2 - (r^n)^2 +  (r^{n+1} - r^n)^2 \Big)
= 0. \notag
\end{align}
which gives \eqref{Energy-decay-un}.
\end{proof}

\begin{remark}
After summing up \eqref{Energy-decay-est1} from $n = 0$ to $N$, we get
\begin{align}\label {Energy-decay-sta}
\frac{\varepsilon}{2} \|\nabla u^{N+1}\|^2  +  \frac{1}{\varepsilon} (r^{N+1})^2  
& + \frac{1}{\tau} \sum_{n = 0}^N\|u^{n+1} - u^n\|_{-1}^2  
+ \frac{\varepsilon}{2} \sum_{n = 0}^N \|\nabla u^{n+1} - \nabla u^n\|^2 \\
& + \frac{1}{\varepsilon}\sum_{n = 0}^N (r^{n+1} - r^n)^2
=  \frac{\varepsilon}{2}  \|\nabla u^0\|^2 + \frac{1}{\varepsilon}  (r^0)^2 
\lesssim   \varepsilon^{-\rho_1}. \notag
\end{align}
It follows from $u^{n+1} - u^n \in L_0^2(\Omega)$ and \eqref{inq-l2tohm1} that
\begin{align}\label{sta-un-l2}
\sum_{n = 0}^N\|u^{n+1} - u^n\|^2  \le \, & \sum_{n = 0}^N\|u^{n+1} - u^n\|_{-1}\|\nabla u^{n+1} - \nabla u^n\| \\
\le \, & \Big(\sum_{n = 0}^N\|u^{n+1} - u^n\|_{-1}^2  \Big)^{\frac12}
\Big(\sum_{n = 0}^N\|\nabla u^{n+1} - \nabla u^n\|^2  \Big)^{\frac12} \notag\\
\lesssim \, &  \varepsilon^{-(\rho_1 + \frac12)}\tau^{\frac12}. \notag
\end{align}
\end{remark}

\subsection{Consistency}
To derive the error estimates of the equivalent transformation \eqref{sav-refor-scheme} of the semi-discrete SAV scheme \eqref{SAV-Euler-v1}, we reformulate the CH equation \eqref{pde-CH} as the truncated form
\begin{align}\label{pde-CH-truncated}
\Delta^{-1} \frac{u(t_{n+1}) - u(t_n)}{\tau} & = - \varepsilon \Delta u(t_{n+1}) + \frac{1}{\varepsilon} f(u(t_n)) + \mathcal{R}^{n+1},  
\end{align}
where the truncation error $\mathcal{R}^{n+1}$ is given by
\begin{align}\label{pde-CH-Rn}
\mathcal{R}^{n+1} 
:= \Big[ \Delta^{-1} \frac{u(t_{n+1}) - u(t_n)}{\tau} - \Delta^{-1} \partial_t u(t_{n+1}) \Big]
+ \frac{1}{\varepsilon}\Big[f(u(t_{n+1})) - f(u(t_n))\Big].  
\end{align}
\begin{lemma}{(consistency estimate)}\label{lem-consistency-Rn} 
Suppose that assumptions \ref{ass-f} and \ref{ass-u0} hold, then we have the following consistency estimate:
\begin{align}\label{consistency-Rn}
\tau \sum_{n = 0}^N \|\mathcal{R}^{n+1} \|_{H^{-1}}^2 \le C \varepsilon^{- \max\{\rho_2+2, \,\rho_3 \}} \tau^2.
\end{align}
\end{lemma}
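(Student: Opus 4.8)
The plan is to estimate $\mathcal{R}^{n+1}$ by splitting it into the two pieces that appear in \eqref{pde-CH-Rn}, namely the temporal truncation piece $\mathcal{T}^{n+1} := \Delta^{-1}\big[\tfrac{u(t_{n+1})-u(t_n)}{\tau} - \partial_t u(t_{n+1})\big]$ and the nonlinear piece $\mathcal{N}^{n+1} := \tfrac{1}{\varepsilon}\big[f(u(t_{n+1})) - f(u(t_n))\big]$, and then bounding each separately in the $H^{-1}$-norm before summing. By the triangle inequality it suffices to control $\tau\sum_n \|\mathcal{T}^{n+1}\|_{H^{-1}}^2$ by $\varepsilon^{-\rho_3}\tau^2$ and $\tau\sum_n\|\mathcal{N}^{n+1}\|_{H^{-1}}^2$ by $\varepsilon^{-(\rho_2+2)}\tau^2$; the assumptions \eqref{sta-u-e} and \eqref{sta-u-d} are tailored exactly to these two terms.

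For $\mathcal{T}^{n+1}$ I would write the backward-Euler defect via Taylor's formula with integral remainder,
\[
\frac{u(t_{n+1}) - u(t_n)}{\tau} - \partial_t u(t_{n+1}) = \frac{1}{\tau}\int_{t_n}^{t_{n+1}}(t_n - s)\,\partial_{tt}u(s)\,\d s,
\]
apply $\Delta^{-1}$ inside the integral, and measure in $H^{-1}$. Since $|t_n - s| \le \tau$ on $[t_n, t_{n+1}]$, the Cauchy--Schwarz inequality in time (using $\int_{t_n}^{t_{n+1}}(t_n-s)^2\,\d s = \tau^3/3$) gives $\|\mathcal{T}^{n+1}\|_{H^{-1}}^2 \lesssim \tau\int_{t_n}^{t_{n+1}}\|\Delta^{-1}u_{tt}(s)\|_{H^{-1}}^2\,\d s$. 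Multiplying by $\tau$, summing over $n$, and telescoping the integrals (extended to $(0,\infty)$) then yields $\tau\sum_n\|\mathcal{T}^{n+1}\|_{H^{-1}}^2 \lesssim \tau^2\int_0^\infty\|\Delta^{-1}u_{tt}\|_{H^{-1}}^2\,\d t \lesssim \varepsilon^{-\rho_3}\tau^2$ by \eqref{sta-u-e}. Here the zero-mean property $\int_\Omega u_{tt}\,\d x = 0$, inherited from the mass conservation \eqref{sta-u-a}, is what makes $\Delta^{-1}u_{tt}$ well defined.

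For $\mathcal{N}^{n+1}$ I would first pass to $L^2$ through the embedding $\|\cdot\|_{H^{-1}}\lesssim\|\cdot\|$, then use the global Lipschitz bound $\max_v|f'(v)|\le L$ from Assumption~\ref{ass-f}(ii) to obtain $\|f(u(t_{n+1})) - f(u(t_n))\| \le L\,\|u(t_{n+1}) - u(t_n)\|$. Writing $u(t_{n+1}) - u(t_n) = \int_{t_n}^{t_{n+1}}u_t(s)\,\d s$ and applying Cauchy--Schwarz gives $\|u(t_{n+1}) - u(t_n)\|^2 \le \tau\int_{t_n}^{t_{n+1}}\|u_t(s)\|^2\,\d s$, so that $\|\mathcal{N}^{n+1}\|_{H^{-1}}^2 \lesssim \varepsilon^{-2}\tau\int_{t_n}^{t_{n+1}}\|u_t(s)\|^2\,\d s$. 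Multiplying by $\tau$, summing, and invoking \eqref{sta-u-d} produces $\tau\sum_n\|\mathcal{N}^{n+1}\|_{H^{-1}}^2 \lesssim \varepsilon^{-2}\tau^2\int_0^\infty\|u_t\|^2\,\d s \lesssim \varepsilon^{-(\rho_2+2)}\tau^2$. Combining the two bounds and taking the larger exponent gives \eqref{consistency-Rn}.

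I do not expect a genuine analytical obstacle here: this is a routine first-order consistency computation and no single step is deep. The two points that require care are (i) keeping the remainder of $\mathcal{T}^{n+1}$ \emph{inside} $\Delta^{-1}$, so that the regularity it consumes is precisely $\|\Delta^{-1}u_{tt}\|_{H^{-1}}$ rather than a stronger norm --- this is exactly why \eqref{sta-u-e} is posed in that weak norm --- and (ii) the bookkeeping of the $\varepsilon$-powers, where the explicit prefactor $\varepsilon^{-1}$ in front of $f$ contributes the $+2$ in $\rho_2+2$ after squaring. The main point is therefore the structural matching of each truncation piece to the correctly weighted regularity assumption, not any estimate that is hard in itself.
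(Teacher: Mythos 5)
Your proposal is correct and follows essentially the same route as the paper: split $\mathcal{R}^{n+1}$ into the temporal defect and the nonlinear increment, bound the first by Taylor's formula with integral remainder plus Cauchy--Schwarz against $\|\Delta^{-1}u_{tt}\|_{H^{-1}}$, bound the second by the Lipschitz property of $f$ and $\|u(t_{n+1})-u(t_n)\|^2\le\tau\int_{t_n}^{t_{n+1}}\|u_t\|^2\,\d s$, then sum and invoke \eqref{sta-u-e} and \eqref{sta-u-d}. The only cosmetic difference is that you pass from $H^{-1}$ to $L^2$ via the embedding $\|\cdot\|_{H^{-1}}\lesssim\|\cdot\|$ while the paper uses the duality pairing with $\|f'(\xi^n)\|_{L^3}\|v\|_{L^6}$; both yield the same $\varepsilon^{-2}\|u(t_{n+1})-u(t_n)\|^2$ bound (and your Taylor remainder correctly carries the $\tfrac1\tau$ prefactor that the paper's displayed formula omits).
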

\begin{proof}
For any $\varphi \in L_0^2(\Omega)$, there holds $\varphi_1 = - \Delta^{-1} \varphi \in L_0^2(\Omega) \cap H^1(\Omega)$ with $\partial_{\bm n} \varphi_1 = 0$ on $\partial \Omega$  and $\|\varphi\|_{-1} = \|\nabla \varphi_1\|$, which gives
\begin{align}
\|\varphi\|_{H^{-1}} & = \sup_{v \,\in H^1(\Omega)} \frac{(\varphi, v )}{\|v\|_{H^1}}
= \sup_{v \,\in H^1(\Omega)} \frac{\big(-\Delta (-\Delta^{-1})\varphi, v \big) }{\|v\|_{H^1}}
= \sup_{v \,\in H^1(\Omega)} \frac{\big(\nabla \varphi_1, \nabla v \big) }{\|v\|_{H^1}} \\
& \le \|\nabla \varphi_1\| = \|\varphi\|_{-1}. \notag
\end{align}
By performing standard calculations, it follows from $\Delta^{-1} u_{tt} \in L_0^2(\Omega)$ that 
\begin{align}
\Big \| \Delta^{-1} \frac{u(t_{n+1}) - u(t_n)}{\tau} - \Delta^{-1} \partial_t u(t_{n+1}) \Big\|_{H^{-1}}^2 
= \, & \Big \| \frac{1}{\tau}\int_{t_n}^{t_{n+1}} (s - t_n) \Delta^{-1} u_{tt} \, \d s  \Big\|_{H^{-1}}^2 \\
\lesssim \, & \tau \int_{t_n}^{t_{n+1}} \| \Delta^{-1} u_{tt}  \|_{H^{-1}}^2 \, \d s \notag\\
\le \, &  \tau \int_{t_n}^{t_{n+1}} \| \Delta^{-1} u_{tt}  \|_{-1}^2 \, \d s, \notag
\end{align}
and
\begin{align}
\Big \| \frac{1}{\varepsilon} f(u(t_{n+1})) -  \frac{1}{\varepsilon} f(u(t_n)) \Big\|_{H^{-1}}^2 
= \, & \varepsilon^{-2} \sup_{v \, \in H^1(\Omega)} \frac{\big(f(u(t_{n+1})) - f(u(t_n)), v \big)^2 }{\|v\|_{H^1}^2} \\
\le \, & \varepsilon^{-2} \sup_{v \,\in H^1(\Omega)} \frac{  \|f'(\xi^n)\|_{L^3}^2 \|u(t_{n+1}) - u(t_n)\|^2 \|v\|_{L^6}^2 }{\|v\|_{H^1}^2} \notag\\
\lesssim \, & \varepsilon^{-2}\|u(t_{n+1}) - u(t_n)\|^2 \notag\\
\lesssim \, & \varepsilon^{-2} \tau \int_{t_n}^{t_{n+1}} \|u_t\|^2\, \d s, \notag
\end{align}
where $\xi^n$ is between $u(t_n)$ and $u(t_{n+1})$.
Thus, we have
\begin{align}
\tau \sum_{n = 0}^N \|\mathcal{R}^{n+1} \|_{H^{-1}}^2 
\lesssim \, & \tau^2 \int_{0}^{T} \| \Delta^{-1} u_{tt}  \|_{-1}^2 \, \d s 
+ \varepsilon^{-2} \tau^2 \int_{0}^{T} \|u_t\|^2\, \d s\\
\lesssim \, &  \varepsilon^{- \rho_3}\tau^2 +  \varepsilon^{- (\rho_2 + 2)} \tau^2. \notag 
\end{align}
The proof is completed.
\end{proof}

\section{Error estimates}\label{sec-4}
In this section,  we will derive the error bound of the semi-discrete scheme \eqref{SAV-Euler-v1}, in which the focus is to obtain the polynomial type dependence of the error bound on $\varepsilon^{-1}$.
If we use the usual error estimate of the SAV numerical scheme \eqref{SAV-Euler-v1}, the error growth depends on $\varepsilon^{-1}$ exponentially. To avoid the exponential dependence on $\varepsilon^{-1}$ induced by using the Gronwall inequality, we need to use a spectral estimate of the linearized Cahn-Hilliard operator, which is given in \cite{alikakos1993spectrum, chen1994spectrum, feng2004error}.

\begin{lemma}{(spectral estimate)}\label{lem-spectral}
Suppose that Assumption \ref{ass-f} holds. Then there exist $0 < \varepsilon_0 << 1$ and a positive  constant $\lambda_0$ such that 
the principle eigenvalue of the linearized Cahn-Hilliard operator
\begin{align*}
\mathcal{L}_{CH} := \Delta (\varepsilon \Delta - \frac{1}{\varepsilon} f'(u) I)
\end{align*}
satisfies for all $t \in [0, T]$
\begin{align}\label{spectral-est}
\lambda_{CH} = \inf_{\substack{0 \neq v \in  H^1(\Omega) \\ \Delta w = v}}
\frac{\varepsilon\|\nabla v\|^2 + \frac{1}{\varepsilon}\big(f'(u(\cdot, t)v, v)\big)}{\|\nabla w\|^2} 
\ge -\lambda_0,
\end{align}   
for $\varepsilon \in (0, \varepsilon_0)$, where $I$ denotes the identity operator and $u$ is the solution of the Cahn-Hilliard problem \eqref{pde-CH}. 
\end{lemma}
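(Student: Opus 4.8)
The plan is to recognize the quotient in \eqref{spectral-est} as the $H^{-1}$-rescaling of the Allen--Cahn spectral quotient and to reduce the statement to the Allen--Cahn spectral estimates of Alikakos--Fusco \cite{alikakos1993spectrum} and Chen \cite{chen1994spectrum}. First I would rewrite the denominator: for mean-zero $v\in L_0^2(\Omega)\cap H^1(\Omega)$ the problem $\Delta w=v$ (with the Neumann condition and $w\in L_0^2(\Omega)$) is uniquely solvable precisely because $\int_\Omega v=0$, and integrating by parts gives $\|\nabla w\|^2=-(w,\Delta w)=-(w,v)=(v,-\Delta^{-1}v)=\|v\|_{-1}^2$. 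Hence \eqref{spectral-est} is equivalent to the assertion that $\varepsilon\|\nabla v\|^2+\frac{1}{\varepsilon}(f'(u)v,v)\ge -\lambda_0\|v\|_{-1}^2$ for all such $v$ and all $\varepsilon\in(0,\varepsilon_0)$, where the numerator is exactly the Allen--Cahn quadratic form associated with the linearized operator $-\varepsilon\Delta+\frac{1}{\varepsilon}f'(u)I$.

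The next step is to invoke the Allen--Cahn spectral estimate itself: under Assumption \ref{ass-f} and the transition-layer structure of the solution $u$ of \eqref{pde-CH} recorded in Assumption \ref{ass-u0}, there exist $\varepsilon_0>0$ and an $\varepsilon$-independent $C_0>0$ with $\varepsilon\|\nabla v\|^2+\frac{1}{\varepsilon}(f'(u)v,v)\ge -C_0\|v\|^2$ for all $v\in H^1(\Omega)$ and $\varepsilon\in(0,\varepsilon_0)$. The tempting route is then to convert the $L^2$ norm on the right into the $H^{-1}$ norm by the interpolation inequality \eqref{inq-l2tohm1}, $\|v\|^2\le\|\nabla v\|\,\|v\|_{-1}$. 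However, applying Young's inequality to $C_0\|\nabla v\|\,\|v\|_{-1}$ either reintroduces a full $\varepsilon\|\nabla v\|^2$ term (which the form has no surplus gradient energy to absorb, since its gradient coefficient is only $\varepsilon$) or produces a constant that blows up as $\varepsilon\to 0$. Quantitatively, for the dangerous near-kernel modes the $H^{-1}$ norm is smaller than the $L^2$ norm by a factor that degenerates with $\varepsilon$, so the naive interpolation loses exactly the wrong power of $\varepsilon$. This is the main obstacle.

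To close the gap I would follow Chen \cite{chen1994spectrum} and Feng--Prohl \cite{feng2004error} and exploit that the negative directions of the form are confined to the approximate kernel of $-\varepsilon\Delta+\frac{1}{\varepsilon}f'(u)I$, namely the translational modes concentrated in the $O(\varepsilon)$ interfacial layer. Splitting $v=v^{\parallel}+v^{\perp}$ accordingly, on $v^{\perp}$ the form is coercive with an $\varepsilon$-uniform spectral gap, $\varepsilon\|\nabla v^{\perp}\|^2+\frac{1}{\varepsilon}(f'(u)v^{\perp},v^{\perp})\ge c_0\big(\varepsilon\|\nabla v^{\perp}\|^2+\frac{1}{\varepsilon}\|v^{\perp}\|^2\big)$, which dominates any negative cross-contribution. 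The delicate part is the kernel component: there the form is only slightly negative, of the same order in $\varepsilon$ by which $\|v^{\parallel}\|_{-1}^2$ is small relative to $\|v^{\parallel}\|^2$, so the two scalings match and the quotient remains bounded below by a fixed $-\lambda_0$. I expect this matching --- the quantitative interface-layer estimate relating the smallness of the form on the translational modes to their $H^{-1}$ norm, uniformly in $\varepsilon$ --- to be the crux of the argument; it is precisely the content established in \cite{alikakos1993spectrum,chen1994spectrum,feng2004error}, and I would invoke it rather than rederive the layer asymptotics.
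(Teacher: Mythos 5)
The paper offers no proof of this lemma at all: it is stated as a known result imported verbatim from \cite{alikakos1993spectrum, chen1994spectrum, feng2004error}, which is exactly where your argument ultimately lands after the (correct) reduction of the denominator to $\|v\|_{-1}^2$ and the (also correct) observation that naive interpolation from the Allen--Cahn estimate loses the wrong power of $\varepsilon$. So your proposal is consistent with the paper's treatment, and your identification of the near-kernel matching in Chen's work as the genuine content is an accurate account of what the cited references actually establish.
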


We will now prove the following error estimates for the semi-discrete numerical scheme, which is the main result of this paper.
\begin{theorem}{(error estimate)}\label{THM:err-est}
We assume that assumptions \ref{ass-f} and \ref{ass-u0} hold and that 
\begin{equation}
\tau \le \tilde C \varepsilon^{\beta_0} \quad \mbox{with} \quad\beta_0 =
\frac{4\alpha_0 + 32 + 4d}{4 - d}, 
\end{equation}
then the discrete solution given by \eqref{SAV-Euler-v1} satisfies the following error estimate for $e^n = u(t_n) - u^n$:
\begin{align}\label{err-Hm1}
\max_{1 \le n \le m}\|e^n\|_{-1}^2  &+ \frac12 \sum_{n = 1}^m \|e^n - e^{n-1}\|_{-1}^2  + \frac12 \varepsilon^4\sum_{n = 1}^m  \tau \|\nabla e^n\|^2
+ \max_{1 \le n \le m} \tau \varepsilon^4 \|\nabla e^m\|^2 \\
&+  \frac12 \varepsilon^4 \sum_{n = 1}^m \tau\|\nabla  e^n - \nabla  e^{n-1}\|^2   \le \kappa_0\varepsilon^{- \alpha_0} \tau^2. \notag
\end{align}
where $\alpha_0 := \max\{ \rho_1 + 3, \, 2\rho_2 + 4, \, \rho_2 + 6, \, \rho_3 + 4\}$, and the constants $\tilde C $ and $\kappa_0$ are given in the proof.
\end{theorem}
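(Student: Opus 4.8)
The plan is to combine a discrete $H^{-1}$ energy estimate with the spectral bound \eqref{spectral-est}, and to close everything by induction on the time level.

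First I would subtract the equivalent SAV scheme \eqref{sav-refor-scheme} from the truncated equation \eqref{pde-CH-truncated} to obtain, for $e^{n+1}=u(t_{n+1})-u^{n+1}$, the error equation
\[
\Delta^{-1}\frac{e^{n+1}-e^n}{\tau}=-\varepsilon\Delta e^{n+1}+\frac1\varepsilon\big(f(u(t_n))-f(u^n)\big)+\mathcal R^{n+1}+\mathcal P^{n+1},
\]
where $\mathcal P^{n+1}$ gathers the two perturbations created by the transformation, namely $\frac1{2\varepsilon}g(u^n)\sum_{i=1}^n D^2A(\xi_i;u^i-u^{i-1})$ and $-\frac1{2\varepsilon}g(u^n)\big(g(u^n),u^{n+1}-u^n\big)$. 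Testing this identity in $L^2(\Omega)$ against $e^{n+1}\in L_0^2(\Omega)$ and rearranging produces the telescoping term $\frac1{2\tau}\big(\|e^{n+1}\|_{-1}^2-\|e^n\|_{-1}^2+\|e^{n+1}-e^n\|_{-1}^2\big)$ from the left-hand side, together with the quadratic form $\varepsilon\|\nabla e^{n+1}\|^2+\frac1\varepsilon(f'(u(t_n))e^{n+1},e^{n+1})$; the latter is assembled after writing $u^n=u(t_n)-e^n$, Taylor-expanding $f(u(t_n))-f(u^n)=f'(u(t_n))e^n-\frac12 f''(\zeta^n)(e^n)^2$, and replacing $e^n$ by $e^{n+1}-(e^{n+1}-e^n)$. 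The leftover pieces — the cross term $\frac1\varepsilon(f'(u(t_n))(e^{n+1}-e^n),e^{n+1})$, the quadratic remainder $\frac1{2\varepsilon}(f''(\zeta^n)(e^n)^2,e^{n+1})$, and the pairings with $\mathcal R^{n+1}$ and $\mathcal P^{n+1}$ — move to the right-hand side as forcing.

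The crucial device is to bound the quadratic form from below without paying an exponential price. Using \eqref{spectral-est} alone gives $\varepsilon\|\nabla v\|^2+\frac1\varepsilon(f'(u)v,v)\ge-\lambda_0\|v\|_{-1}^2$, which consumes all of the diffusion; to retain a little coercivity I would split the form in the proportion $(1-\varepsilon^3):\varepsilon^3$, apply the spectral estimate to the first part and the elementary bound $\frac1\varepsilon(f'(u)v,v)\ge-\frac{L}\varepsilon\|v\|^2\ge-\frac\varepsilon2\|\nabla v\|^2-\frac{L^2}{2\varepsilon^3}\|v\|_{-1}^2$ (from \eqref{inq-l2tohm1} and Young) to the second. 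Choosing the weights this way keeps the penalty on $\|v\|_{-1}^2$ of order one in $\varepsilon$ while leaving a coercive remnant $\frac{\varepsilon^4}2\|\nabla e^{n+1}\|^2$; this is exactly the origin of the $\varepsilon^4$ weights in \eqref{err-Hm1}. With $v=e^{n+1}$ this yields
\[
\frac1{2\tau}\big(\|e^{n+1}\|_{-1}^2-\|e^n\|_{-1}^2+\|e^{n+1}-e^n\|_{-1}^2\big)+\frac{\varepsilon^4}2\|\nabla e^{n+1}\|^2\lesssim\|e^{n+1}\|_{-1}^2+(\text{forcing}),
\]
and the forcing is absorbed by Young's inequality into a fraction of $\frac{\varepsilon^4}2\|\nabla e^{n+1}\|^2$, of $\frac12\|e^{n+1}-e^n\|_{-1}^2$, of $\frac12\varepsilon^4\|\nabla e^{n+1}-\nabla e^n\|^2$, and of $\|e^{n+1}\|_{-1}^2$ (the last feeding the Gronwall term); the consistency estimate \eqref{consistency-Rn} handles $\mathcal R^{n+1}$.

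The main obstacle is $\mathcal P^{n+1}$, above all the history sum $\sum_{i=1}^n D^2A(\xi_i;u^i-u^{i-1})$. The crude bound $|D^2A(v;w)|\lesssim\|w\|^2$ together with \eqref{sta-un-l2} only gives $\sum_i\|u^i-u^{i-1}\|^2\lesssim\varepsilon^{-(\rho_1+1/2)}\tau^{1/2}$, which is far too weak. To recover the missing powers of $\tau$ I would write $u^i-u^{i-1}=(u(t_i)-u(t_{i-1}))-(e^i-e^{i-1})$: the smooth contribution telescopes through $A$ and is controlled by Assumption \ref{ass-u0}, while the error contribution is governed by the inductive hypothesis, which must therefore already supply bounds on $\|e^{i-1}-e^{i-2}\|_{-1}$ and $\varepsilon^4\|\nabla e^{i-1}-\nabla e^{i-2}\|^2$ for every $i\le n$. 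This is why \eqref{err-Hm1} carries precisely these consecutive-difference quantities: they are both the output and the fuel of the induction, and the extra factor of $\tau$ gained from the difference structure (the super-convergence of these nonlinear terms) is what pushes $\mathcal P^{n+1}$ below $\kappa_0\varepsilon^{-\alpha_0}\tau^2$. The quadratic remainder $\frac1{2\varepsilon}(f''(\zeta^n)(e^n)^2,e^{n+1})$ is treated similarly, using a Gagliardo--Nirenberg embedding of $\|e^n\|_{L^4}$ into $\|e^n\|$ and $\|\nabla e^n\|$, which is where the dimension enters and forces the distinct values of $\beta_0$ for $d=2$ and $d=3$. Finally I would sum from $n=1$ to $m$, invoke the discrete Gronwall inequality — whose constant is the $\varepsilon$-independent $\lambda_0+L^2/2$, so that only a harmless factor $e^{CT}$ appears — and use the step restriction $\tau\le\tilde C\varepsilon^{\beta_0}$ both to absorb the higher-order terms and to propagate the induction (in particular to keep $\|\nabla e^{n+1}\|$ small enough that the quadratic and perturbation terms remain subordinate), thereby obtaining \eqref{err-Hm1} with $\alpha_0=\max\{\rho_1+3,\,2\rho_2+4,\,\rho_2+6,\,\rho_3+4\}$.
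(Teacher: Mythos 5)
Your overall architecture matches the paper's: the same error equation from \eqref{sav-refor-scheme} minus \eqref{pde-CH-truncated}, the same Taylor rearrangement to expose the quadratic form $\varepsilon\|\nabla e^{n+1}\|^2+\frac1\varepsilon(f'(u(t_n))e^{n+1},e^{n+1})$, the same $(1-\varepsilon^3):\varepsilon^3$ splitting of the spectral estimate to retain $O(\varepsilon^4)$ coercivity, the same split $u^i-u^{i-1}=(u(t_i)-u(t_{i-1}))-(e^i-e^{i-1})$ for the history sum, the Gagliardo--Nirenberg treatment of the $f''$ remainder (which is indeed where the dimension enters $\beta_0$), and closure by induction plus Gronwall with an $\varepsilon$-independent constant.

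However, there is a genuine gap: you only perform one energy estimate, testing the error equation against $e^{n+1}$. That test produces $\|e^{n+1}-e^n\|_{-1}^2$ and $\varepsilon\|\nabla e^{n+1}\|^2$ on the left, but it cannot produce the consecutive gradient differences $\|\nabla e^{n+1}-\nabla e^n\|^2$ there — yet you propose to absorb forcing contributions ``into a fraction of $\frac12\varepsilon^4\|\nabla e^{n+1}-\nabla e^n\|^2$.'' After Young's inequality, the cross term $\frac1\varepsilon(f'(u(t_n))(e^{n+1}-e^n),e^{n+1})$ and the perturbation pairings each leave residuals of the form $C\varepsilon^{-(\eta_2+4)}\tau\|\nabla e^{n+1}-\nabla e^n\|^2$, and with your single test there is nothing on the left to absorb them into; moreover the sum $\frac12\varepsilon^4\sum_n\tau\|\nabla e^n-\nabla e^{n-1}\|^2$ appearing in \eqref{err-Hm1} — which you correctly identify as ``fuel'' for the induction via the history term — is never actually established. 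The paper resolves this with a second, independent energy estimate obtained by testing the error equation against $e^{n+1}-e^n$, which yields $\frac12\|\nabla e^{n+1}\|^2-\frac12\|\nabla e^n\|^2+\frac12\|\nabla e^{n+1}-\nabla e^n\|^2$ on the left; this estimate is then multiplied by $\tau\varepsilon^4$ and added to the $e^{n+1}$-test estimate before summing and applying Gronwall. That combination is precisely what generates the $\varepsilon^4$-weighted gradient and gradient-difference sums in the theorem and what makes the absorption under $\tau\le\tilde C_1\varepsilon^{\eta_2+8}$ possible. You should also state the base case explicitly: the paper's Step 1 derives $\|e^1\|_{-1}^2+\tau\varepsilon\|\nabla e^1\|^2\lesssim\varepsilon^{-\max\{\rho_1+3,\rho_2+3,\rho_3+1\}}\tau^2$ from the $n=0$ scheme before the induction can start.
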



\begin{proof}
We use the mathematical induction as follows. The proof is split into four steps. The first step gives the error estimate for the first step $t = t^1$. Steps  two and three use the spectral estimate \eqref{spectral-est} to avoid exponential blow-up in $\varepsilon^{-1}$ of the error constants. In the last step, an inductive argument is used to conclude the proof.

{\em Step 1: Estimation of $\|e^1\|_{-1}^2 + \tau \varepsilon \|\nabla e^1\|^2$}.
For $n = 0$ in \eqref{SAV-Euler}, we have
\begin{equation}
\left \{
\begin{aligned}
\Delta^{-1} \frac{u^1 - u^0}{\tau} & = - \varepsilon \Delta u^1 + \frac{1}{\varepsilon} r^1 g(u^0)  \\ 
r^1 & = r^0 + \frac12 \big(g(u^0), u^1 - u^0 \big) 
\end{aligned} 
\right .
\end{equation} 
After plugging the second equation into the first equation, we get
\begin{align}\label{sav-scheme-u1}
\Delta^{-1} \frac{u^1 - u^0}{\tau} & = - \varepsilon \Delta u^1 + \frac{1}{\varepsilon} f(u^0) 
+ \frac{1}{2\varepsilon} g(u^0)\big(g(u^0), u^1 - u^0 \big).
\end{align} 
Subtracting \eqref{sav-scheme-u1} from \eqref{pde-CH-truncated}, we get the corresponding error equation 
\begin{align}
\Delta^{-1} \frac{e^1}{\tau} & = - \varepsilon \Delta e^1 + \mathcal{R}^1 - \frac{1}{2\varepsilon} g(u^0)\big(g(u^0), u^1 - u^0 \big).
\end{align}
Taking the $L^2$ inner product of the above equation with $e^1 \in L_0^2(\Omega)$, we have 
\begin{align}
\|e^1\|_{-1}^2 + \tau \varepsilon \|\nabla e^1\|^2  =  - \tau (\mathcal{R}^1, e^1) + \frac{\tau}{2\varepsilon} \big(g(u^0), u^1 - u^0 \big)\big(g(u^0), e^1 \big).
\end{align}
Using Poincar\'e's inequality for $e^1 \in L_0^2(\Omega)$ and Lemma~\ref{lem-consistency-Rn}, we have
\begin{align}
\tau (\mathcal{R}^1, e^1)  \le \frac{\varepsilon \tau }{4}\|\nabla e^1\|^2 + C\varepsilon^{-1} \tau \|\mathcal{R}^1\|_{H^{-1}}^2 
\le \frac{\varepsilon \tau }{4} \|\nabla e^1\|^2 +   C\varepsilon^{- \max\{\rho_2+3, \,\rho_3 +1\}}\tau^2.
\end{align}
From \eqref{sta-un-l2}, we have
\begin{align}
\frac{\tau}{2\varepsilon} \big(g(u^0),  u^1 - u^0\big) \big(g(u^0), e^1\big)
\le\, & C \tau \varepsilon^{-1} \|g(u^0)\|^2\|u^1 - u^0\|\|e^1\|
\\
\le\, & C \tau \varepsilon^{-1}  \|u^1 - u^0\|
\|e^1\|_{-1}^{\frac12}\|\nabla e^1\|^{\frac12} \notag\\
\le \, & \frac12 \tau^{\frac12} \varepsilon^{\frac12}\|e^1\|_{-1} \|\nabla e^1\| 
+ C \tau^{\frac32} \varepsilon^{-\frac52} \|u^1 - u^0\|^2  \notag
\\
\le \, & \frac14 \|e^1\|_{-1}^2 +  \frac{\varepsilon \tau }{4} \|\nabla e^1\|^2 +  C\varepsilon^{-(\rho_1 + 3)}\tau^2 \notag
\end{align}
Thus, there exists a positive constant $\kappa_0'$ such that
\begin{align}\label{err-hm1-e1}
\|e^1\|_{-1}^2 + \tau \varepsilon \|\nabla e^1\|^2 \le \kappa_0' \varepsilon^{- \max\{\rho_1 + 3, \,\rho_2+3, \,\rho_3 +1\}} \tau^2.
\end{align}

We use the mathematical induction as follows.
For $m = 1$, \eqref{err-Hm1} holds from \eqref{err-hm1-e1}. 
We suppose  \eqref{err-Hm1} holds for $m = 1, 2, \dots, N$, and show that \eqref{err-Hm1} is also valid for $m = N+1$.

Subtracting \eqref{sav-refor-scheme} from \eqref{pde-CH-truncated}, by denoting $e^n = u(t_n) - u^n$, we get the error equation 
\begin{align}\label{err-eqn}
\Delta^{-1} \frac{e^{n+1} - e^n}{\tau} 
=\, & - \varepsilon \Delta e^{n+1} + \frac{1}{\varepsilon} \Big[f(u(t_n)) - f(u^n)\Big] \\
& + \frac{1}{2\varepsilon} g(u^n) \sum_{i = 1}^n D^2 A(\xi_i\,; u^i - u^{i - 1}) \notag
\\
& - \frac{1}{2\varepsilon} g(u^n) \big(g(u^n), u^{n+1} - u^n\big) + \mathcal{R}^{n+1}. \notag
\end{align}

{\em Step 2: Estimation  of $\displaystyle \|\nabla e^{N+1}\|^2 + \sum_{ n = 0}^N \|\nabla  e^{n+1} - \nabla  e^n\|^2 $.}
By taking the $L^2$ inner product of \eqref{err-eqn} with $e^{n+1} - e^n \in L_0^2(\Omega)$, we have
\begin{align}\label{err-h1-1}
\frac12\|\nabla e^{n+1}\|^2 - \,& \frac12\|\nabla e^n\|^2  + \frac12\|\nabla  e^{n+1} - \nabla  e^n\|^2 
+ \frac{\tau}{\varepsilon} \Big\|\frac{e^{n+1} - e^n}{\tau} \Big\|_{-1}^2 \\
=\,& - \frac{1}{\varepsilon^2} \Big(f(u(t_n)) - f(u^n), e^{n+1} - e^n\Big) \notag\\
& - \frac{1}{2\varepsilon^2} \big(g(u^n), e^{n+1} - e^n\big) \sum_{i = 1}^n D^2 A(\xi_i\,; u^i - u^{i - 1}) \notag
\\
& + \frac{1}{2\varepsilon^2} \big(g(u^n), e^{n+1} - e^n\big) (g(u^n), u^{n+1} - u^n) - \frac{1}{\varepsilon}\big(\mathcal{R}^{n+1}, e^{n+1} - e^n\big) \notag\\
=: \,& K_1 + K_2 + K_3 + K_4. \notag
\end{align}
It follows from \eqref{inq-l2tohm1} that
\begin{align}\label{err-h1-k1}
K_1 = \, & - \frac{1}{\varepsilon^2} \Big(f(u(t_n)) - f(u^n), e^{n+1} - e^n\Big) \\
\le\,& \varepsilon^{-2} \|f'(w^n)\|_{L^\infty} \|e^n\|\|e^{n+1} - e^n\| \notag\\
\le \,& C\varepsilon^{-2} \|\nabla e^n\|^{\frac12} \|e^n\|_{-1}^{\frac12}\|\nabla e^{n+1} - \nabla e^n\|^{\frac12} \|e^{n+1} - e^n\|_{-1}^{\frac12}\notag \\
\le \,& C\frac{1}{\gamma_0} \varepsilon^{-2}\tau^{\frac12} \|\nabla e^n\| \|e^n\|_{-1}
+ \gamma_0\varepsilon^{-2} \tau^{-\frac12} \|\nabla e^{n+1} - \nabla e^n\|\|e^{n+1} - e^n\|_{-1}\notag\\
\le \,& C\tau \|\nabla e^n\|^2 + C \varepsilon^{-4} \|e^n\|_{-1}^2
+  \frac12 \gamma_0\tau^{-1}\varepsilon^{-4} \|e^{n+1} - e^n \|_{-1}^2 
+ \frac12 \gamma_0 \|\nabla e^{n+1} - \nabla e^n\|^2 \notag
\end{align}
where $w_n = \theta u^n + (1 - \theta) u(t_n)$ with $\theta \in (0, 1)$,  and $\gamma_0$ is a  sufficiently small constant such that the two terms involving $\|e^{n+1} - e^n\|_{-1}^2$ and $\|\nabla e^{n+1} - \nabla e^n\|^2$ can be absorbed by the left hand side in the Step 4.

By using  $D^2 A(v; w) \lesssim \big(\|f'(v)\|_{L^\infty} + \|f(v)\|^2 \big) \|w\|^2$, the term $K_2$ can be bounded as
\begin{align}
K_2 = \,& - \frac{1}{2\varepsilon^2} \big(g(u^n), e^{n+1} - e^n\big) \sum_{i = 1}^n D^2 A(\xi_i\,; u^i - u^{i - 1}) \\
\lesssim\, &  \frac{1}{2\varepsilon^2} \big|\big(g(u^n), e^{n+1} - e^n\big)\big| \sum_{i = 1}^n \|u^i - u^{i - 1}\|^2 \notag\\
\lesssim \, &  \frac{1}{\varepsilon^2} \big|\big(g(u^n), e^{n+1} - e^n\big)\big| \sum_{i = 1}^n \|e^i - e^{i - 1}\|^2
+ \frac{1}{\varepsilon^2} \big|\big(g(u^n), e^{n+1} - e^n\big)\big| \sum_{i = 1}^n\| u(t_i) - u(t_{i - 1})\|^2 \notag\\
=:\,& K_{21} + K_{22}. \notag
\end{align}
From the assumptions of induction,  we have
\begin{align}\label{err-h1-est1}
\big(g(u^n), e^{n+1} - e^n\big) & =  \frac{1}{ \sqrt{\int_{\Omega} F(u^n) \, \d x + c_0}}
\big(f(u^n), e^{n+1} - e^n\big) \\
& \le  C\|f(u^n)\| \|e^{n+1} - e^n\| \notag\\
& \le  C\|\nabla e^{n+1} - \nabla e^n\|^{\frac12} \|e^{n+1} - e^n\|_{-1}^{\frac12},
\notag\\
\label{err-h1-est2}
\sum_{i = 1}^n  \|e^i - e^{i-1}\|^2
&  \lesssim \Big(\sum_{i = 1}^n \|e^i - e^{i-1}\|_{-1}^2 \Big)^{\frac12} 
\Big(\sum_{i = 1}^n \|\nabla e^i - \nabla e^{i-1}\|^2 \Big)^{\frac12} \\
& \lesssim  \kappa_0 \varepsilon^{-(\alpha_0 + 2)}\tau^{\frac32}, \notag\\
\label{err-h1-est3}
\sum_{i = 1}^n \|u(t_i) - u(t_{i - 1})\|^2
& \lesssim  \tau \sum_{i = 1}^n  \int_{t_{i-1}}^{t_i} \|u_t\|^2 \, \d s 
\lesssim   \tau \int_{0}^{t_n} \|u_t\|^2 \, \d s \\
& \lesssim  \varepsilon^{-\rho_2}\tau. \notag
\end{align}
Using the estimates above, 
we have
\begin{align}
K_{21} 
\le \,& C \varepsilon^{-2} \|\nabla e^{n+1} - \nabla e^n\|^{\frac12} \|e^{n+1} - e^n\|_{-1}^{\frac12}   \kappa_0 \varepsilon^{-(\alpha_0 + 2)}\tau^{\frac32} \\
\le \, & \gamma_0 \varepsilon^{-2} \tau^{-\frac12} \|\nabla e^{n+1} - \nabla e^n\|\|e^{n+1} - e^n\|_{-1}
+ C \frac{1}{\gamma_0}\kappa_0^2 \varepsilon^{- (2\alpha_0 + 6)}\tau^{\frac72}  \notag \\
\le \,&  \frac12 \gamma_0 \tau^{-1} \varepsilon^{-4} \|e^{n+1} - e^n \|_{-1}^2
+  \frac12 \gamma_0 \|\nabla e^{n+1} - \nabla e^n\|^2
+  C\kappa_0^2 \varepsilon^{- (2\alpha_0 + 6)}\tau^{\frac72} . \notag 
\end{align}
and
\begin{align}
K_{22} 
\le \, & C\varepsilon^{-2} \|e^{n+1} - e^n\|_{-1}^{\frac12} \|\nabla e^{n+1} - \nabla e^n\|^{\frac12}  \varepsilon^{-\rho_2}  \tau \\
\le \, &   \gamma_0 \varepsilon^{-2}\tau^{-\frac12} \|\nabla e^{n+1} - \nabla e^n\|\|e^{n+1} - e^n\|_{-1}
+ C \frac{1}{\gamma_0} \varepsilon^{- (2\rho_2 + 2)} \tau^{\frac52} \notag \\
\le \, &  \frac12  \gamma_0 \tau^{-1} \varepsilon^{-4} \|e^{n+1} - e^n \|_{-1}^2
+ \frac12  \gamma_0 \|\nabla e^{n+1} - \nabla e^n\|^2
+  C \varepsilon^{- (2\rho_2 + 2)} \tau^{\frac52} . \notag 
\end{align}
In addition, we divided the term $K_3$ into two parts as
\begin{align}
K_3 = \,& \frac{1}{2\varepsilon^2} \big(g(u^n), e^{n+1} - e^n\big) \big(g(u^n), u^{n+1} - u^n\big) \\
= \,&  -\frac{1}{2\varepsilon^2} \big(g(u^n), e^{n+1} - e^n\big) \big(g(u^n), e^{n+1} - e^n\big) \notag\\
& + \frac{1}{2\varepsilon^2} \big(g(u^n), e^{n+1} - e^n\big) \big(g(u^n), u(t_{n+1}) - u(t_n)\big) \notag \\
=:\, & K_{31} + K_{32}, \notag
\end{align}
where
\begin{align}
K_{31} = \, & - \frac{1}{2\varepsilon^2} \big(g(u^n), e^{n+1} - e^n\big) \big(g(u^n), e^{n+1} - e^n\big) \\
\le \, & C \varepsilon^{-2} \|g(u^n)\|^2 \|e^{n+1} - e^n\|_{-1} \|\nabla e^{n+1} - \nabla e^n\|  \notag \\
\le \, & C \frac{1}{\gamma_0}\varepsilon^{-4} \|e^{n+1} - e^n \|_{-1}^2
+ \frac12 \gamma_0\|\nabla e^{n+1} - \nabla e^n\|^2, \notag
\end{align}
and
\begin{align}
K_{32} =  \, & \frac{1}{2\varepsilon^2} \big(g(u^n), e^{n+1} - e^n\big) \big(g(u^n), u(t_{n+1}) - u(t_n)\big)\\ 
\le \, & C \varepsilon^{-2} \|e^{n+1} - e^n\|_{-1}^{\frac12}\|\nabla e^{n+1} - \nabla e^n\|^{\frac12}  \|u(t_{n+1}) - u(t_n)\| \notag \\
\le \, &  \gamma_0 \varepsilon^{-2}
\tau^{-\frac12} \|e^{n+1} - e^n\|_{-1}\|\nabla e^{n+1} - \nabla e^n\|
+  C \gamma_0^{-1}\varepsilon^{- 2}\tau^{\frac32}  \int_{t_n}^{t_{n+1}}\|u_t\|^2 \, \d s \notag \\
\le \, & \frac12 \gamma_0\tau^{-1} \varepsilon^{-4} \|e^{n+1} - e^n \|_{-1}^2
+ \frac12 \gamma_0 \|\nabla e^{n+1} - \nabla e^n\|^2
+ C\varepsilon^{- 2}\tau^{\frac32}  \int_{t_n}^{t_{n+1}}\|u_t\|^2 \, \d s , \notag 
\end{align}
By using Poincar\'e's inequality for $e^{n+1} -  e^n \in L_0^2(\Omega)$, the last term on the right hand side of \eqref{err-h1-1} can be bounded by
\begin{align}\label{err-h1-k4}
K_4 = \frac{1}{\varepsilon}\big(\mathcal{R}^{n+1}, e^{n+1} - e^n\big) 
\le   \frac12 \gamma_0 \|\nabla e^{n+1} - \nabla e^n\|^2 
+ C\frac{1}{\gamma_0} \varepsilon^{-2} \|\mathcal{R}^{n+1}\|_{H^{-1}}^2.
\end{align}
Combining these estimates \eqref{err-h1-k1}--\eqref{err-h1-k4} together with \eqref{err-h1-1}, and taking the summation for $n = 0$ to $N$, we have
\begin{align}\label{err-h1-result}
\|\nabla e^{N+1}\|^2  & +  (1 - 6 \gamma_0) \sum_{n = 0}^N \|\nabla  e^{n+1} - \nabla  e^n\|^2
\le  C\tau \sum_{n = 0}^N  \|\nabla e^n\|^2 + C \varepsilon^{-4}\sum_{n = 0}^N  \|e^n\|_{-1}^2 \\
& 
+  (C + 4 \gamma_0 \tau^{-1})\varepsilon^{-4}\sum_{n = 0}^N  \|e^{n+1} - e^n \|_{-1}^2 +  C\varepsilon^{-2} \sum_{n = 0}^N \|\mathcal{R}^{n+1}\|_{H^{-1}}^2
\notag\\
& +  C\kappa_0^2 \varepsilon^{- (2\alpha_0 + 6)}\tau^{\frac52} 
+  C  \varepsilon^{- (2\rho_2 + 2)} \tau^{\frac32} +  C \varepsilon^{-2}\tau^{\frac32}  \int_0^T\|u_t\|^2 \, \d s.
\notag
\end{align}

{\em  Step 3: Estimation of $\displaystyle \|e^{N+1}\|_{-1}^2 + \sum_{n = 0}^N \|e^n - e^{n-1}\|_{-1}^2 $.}
Taking the $L^2$ inner product of \eqref{err-eqn} with $e^{n+1} \in L_0^2(\Omega)$, we get
\begin{align}\label{err-hm1-1}
\frac{1}{2\tau} \Big(\|e^{n+1}\|_{-1}^2 & -  \|e^n\|_{-1}^2  + \|e^{n+1} -  e^n\|_{-1}^2 \Big)
+ \varepsilon \|\nabla e^{n+1}\|^2 
+ \frac{1}{\varepsilon} \Big(f(u(t_n)) - f(u^n), e^{n+1}\Big) 
\notag\\
& + \frac{1}{2\varepsilon} \big(g(u^n), e^{n+1} \big) \sum_{i = 1}^n D^2 A(\xi_i\,; u^i - u^{i - 1}) 
\\
& - \frac{1}{2\varepsilon} \big(g(u^n), e^{n+1}\big) \big(g(u^n), u^{n+1} - u^n \big) + \big(\mathcal{R}^{n+1}, e^{n+1} \big) = 0.\notag
\end{align}
We denote $w^n = \theta u^n + (1-\theta) u(t_n)$ with $\theta \in (0, 1)$, and using the Taylor expansion to get
\begin{align*}
\frac{1}{\varepsilon} \Big(f(u(t_n)) - f(u^n), e^{n+1}\Big) 
=\, & \frac{1}{\varepsilon} \Big(f'(u(t_n)) e^n - \frac12 f''(w^n)(e^n)^2, e^{n+1}\Big) \\
=\, & \frac{1}{\varepsilon} \Big(f'(u(t_n)) e^{n+1}, e^{n+1}\Big)
- \frac{1}{\varepsilon} \Big(f'(u(t_n)) (e^{n+1} - e^n), e^{n+1}\Big) \notag\\
& - \frac{1}{2\varepsilon} \Big(f''(w^n)(e^n)^2, e^{n+1}\Big).\notag 
\end{align*}
Then, \eqref{err-hm1-1} becomes
\begin{align}\label{err-hm1-2}
\frac{1}{2\tau}  \Big(\|e^{n+1}\|_{-1}^2  -\, &  \|e^n\|_{-1}^2  + \|e^{n+1} -  e^n\|_{-1}^2 \Big)
+ \varepsilon \|\nabla e^{n+1}\|^2 
+ \frac{1}{\varepsilon} \Big(f'(u(t_n)) e^{n+1}, e^{n+1}\Big) 
\\
= \, & \frac{1}{\varepsilon} \Big(f'(u(t_n)) (e^{n+1} - e^n), e^{n+1}\Big) 
+ \frac{1}{2\varepsilon} \Big(f''(w^n)(e^n)^2, e^{n+1}\Big) \notag\\
& - \frac{1}{2\varepsilon} \big(g(u^n), e^{n+1} \big) \sum_{i = 1}^n D^2 A(\xi_i\,; u^i - u^{i - 1}) 
\notag\\
& + \frac{1}{2\varepsilon} \big(g(u^n), e^{n+1}\big) \big(g(u^n), u^{n+1} - u^n \big) 
- \big(\mathcal{R}^{n+1}, e^{n+1} \big) \notag\\
=: \, & T_1 + T_2 + T_3 + T_4 + T_5. \notag
\end{align}
Using Lemma~\ref{lem-spectral}, there holds
\begin{align}\label{err-hm1-spectral}
\varepsilon \|\nabla e^{n+1}\|^2 
+ \frac{1}{\varepsilon} \Big(f'(u(t_n)) e^{n+1}, e^{n+1}\Big) \ge - \lambda_0\|e^{n+1}\|_{-1}^2.
\end{align}
If the entire $\varepsilon \|\nabla e^{n+1}\|^2 $ term is used to control the the term $\varepsilon^{-1}\big(f'(u(t_n)) e^{n+1}, e^{n+1}\big)$, we will not
be able to control the $\|\nabla e^{n+1}\|^2 $ terms in $T_j$, $j = 1, \dots, 5$. 
So we apply \eqref{err-hm1-spectral} with a scaling factor $(1 - \eta)$ close to but smaller than 1, to get
\begin{align}\label{err-hm1-spectral-p1}
- (1 - \eta)\frac{1}{\varepsilon} \Big(f'(u(t_n)) e^{n+1}, e^{n+1}\Big) 
\le (1 - \eta)\lambda_0 \|e^{n+1}\|_{-1}^2 +  (1 - \eta) \varepsilon \|\nabla e^{n+1}\|^2.
\end{align}
On the other hand, 
\begin{align}\label{err-hm1-spectral-p2}
- \frac{\eta}{\varepsilon} \Big(f'(u(t_n)) e^{n+1}, e^{n+1}\Big) 
\le \frac{C\eta}{\varepsilon} \|e^{n+1}\|^2 
\le \frac{C\eta}{\varepsilon^2 \eta_1} \|e^{n+1}\|_{-1}^2 + \frac{\eta \eta_1}{4}\|\nabla e^{n+1}\|^2.
\end{align}
The first term $T_1$ on the right-hand side of \eqref{err-hm1-2} can be bounded by 
\begin{align}\label{err-hm1-T1}
T_1 = \, & \frac{1}{\varepsilon} \Big(f'(u(t_n)) (e^{n+1} - e^n), e^{n+1}\Big)\\
\le \, & \varepsilon^{-1} \|f'(u(t_n))\|_{L^\infty} \|e^{n+1} - e^n\| \|e^{n+1}\| \notag\\
\le \, & C\varepsilon^{-1}  \|e^{n+1} - e^n\|_{-1}^{\frac12} \|\nabla e^{n+1} - \nabla e^n\|^{\frac12} \|e^{n+1}\|_{-1}^{\frac12} \|\nabla e^{n+1}\|^{\frac12} \notag\\
\le \, & \tau^{-\frac12} \|e^{n+1} - e^n\|_{-1} \|e^{n+1}\|_{-1} 
+ C \varepsilon^{- 2}\tau^{\frac12} \|\nabla e^{n+1} - \nabla e^n\|\|\nabla e^{n+1}\| \notag\\
\le \, & \frac12 \gamma_0 \tau^{-1} \|e^{n+1} - e^n\|_{-1}^2 + C\frac{1}{\gamma_0}\|e^{n+1}\|_{-1}^2
+ \frac{1}{16} \varepsilon^{\eta_2} \|\nabla e^{n+1}\|^2
+ T_1^*. \notag
\end{align}
where $ \displaystyle T_1^* : = C\varepsilon^{-(\eta_2 + 4)}\tau \|\nabla e^{n+1} - \nabla e^n\|^2$ and $\gamma_0$ is sufficiently small.
To control the last term $T_1^*$ on the right-hand side of \eqref{err-hm1-T1}, we assume that $\tau \le \Tilde C_1 \varepsilon^{\eta_2 + 8}$ to get
\begin{align*}
\tau\sum_{n = 0}^N T_1^* 
:= \, & C\varepsilon^{-(\eta_2 + 4)}\tau^2 \sum_{n = 0}^N  \|\nabla e^{n+1} - \nabla e^n\|^2 \notag\\
= \, & C\varepsilon^{-(\eta_2 + 8)}\tau \Big(\varepsilon^4\tau \sum_{n = 0}^N \|\nabla e^{n+1} - \nabla e^n\|^2 \Big)\notag \\
\le \, & \frac{1}{16} \varepsilon^4 \tau \sum_{n = 0}^N  \|\nabla e^{n+1} - \nabla e^n\|^2.
\end{align*}
Using the Sobolev interpolation inequality, we have for $v \in L_0^2(\Omega) \cap H^1(\Omega)$%
\begin{align}
\|v\|_{L^4} \le C\|\nabla v\|^{\frac{d}{4}}\|v\|^{1- \frac{d}{4}}
\le C\|\nabla v\|^{\frac{d}{4}} \|v\|_{-1}^{\frac12 - \frac{d}{8}}\|\nabla v\|^{\frac12 - \frac{d}{8} }= C\|\nabla v\|^{\frac12 + \frac{d}{8}} \|v\|_{-1}^{\frac12 - \frac{d}{8}},
\end{align}
which together with the assumptions of induction yields
\begin{align}\label{err-hm1-T2}
T_2 = \, & \frac{1}{2\varepsilon} \Big(f''(w^n)(e^n)^2, e^{n+1}\Big) \\
\le \, & C \varepsilon^{-1} \|e^n\|_{L^4}^2\|e^{n+1}\| \notag\\
\le \, & C \varepsilon^{-1} \|\nabla e^n\|^{1 + \frac{d}{4}} 
\|e^n\|_{-1}^{ 1 - \frac{d}{4}}
\|e^{n+1}\|_{-1}^{\frac12} \|\nabla e^{n+1}\|^{\frac12} 
\notag\\
\le \,&  C \kappa_0 \varepsilon^{- ( \alpha_0+ 3 + \frac{d}{2})} \tau^{\frac32 - \frac{d}{8}} \|e^{n+1}\|_{-1}^{\frac12} \|\nabla e^{n+1}\|^{\frac12} 
\notag\\
\le \, &  C \varepsilon^{\frac{\eta_2}{2}}\|\nabla e^{n+1}\| \|e^{n+1}\|_{-1}  
+   C\kappa_0^2 \varepsilon^{- (2\alpha_0 + 6 + d + \frac{\eta_2}{2})} \tau^{3 -\frac{d}{4}} 
\notag\\
\le \, & C\|e^{n+1}\|_{-1}^2 + \frac{1}{16} \varepsilon^{\eta_2}\|\nabla e^{n+1}\|^2
+   C\kappa_0^2 \varepsilon^{- (2\alpha_0 + 6 + d + \frac{\eta_2}{2})} \tau^{3 -\frac{d}{4}}. \notag
\end{align}
It follows from \eqref{err-h1-est2}-\eqref{err-h1-est3} that
\begin{align}\label{err-hm1-T3}
T_3 = \, & - \frac{1}{2\varepsilon} \big(g(u^n), e^{n+1} \big) \sum_{i = 1}^n D^2 A(\xi_i\,; u^i - u^{i - 1}) \\
\lesssim \, & \frac{1}{2\varepsilon}\big| \big(g(u^n), e^{n+1} \big)\big| \sum_{i = 1}^n \|u^i - u^{i - 1}\|^2 \notag\\
\le \, & \frac{1}{\varepsilon}\big| \big(g(u^n), e^{n+1} \big)\big| \sum_{i = 1}^n \|e^i - e^{i - 1}\|^2
+ \frac{1}{\varepsilon}\big| \big(g(u^n), e^{n+1} \big)\big| \sum_{i = 1}^n \| u(t_i) - u(t_{i - 1}) \|^2 \notag\\
\le \, & C\varepsilon^{-1} \|g(u^n)\|  \|e^{n+1}\| \Big(\kappa_0\varepsilon^{-(\alpha_0 + 2)} \tau^{\frac32}  + \varepsilon^{-\rho_2} \tau \Big)\notag
\\
\le \, & C\varepsilon^{-1} \|e^{n+1}\|_{-1}^{\frac12}  \|\nabla e^{n+1}\|^{\frac12}  \Big(\kappa_0\varepsilon^{-(\alpha_0 + 2)} \tau^{\frac32}  +  \varepsilon^{-\rho_2}\tau \Big) \notag
\\
\le \, & \varepsilon^{\frac{\eta_2}{2}}\|e^{n+1}\|_{-1}\|\nabla e^{n+1}\|
+  C \kappa_0^2 \varepsilon^{-(2\alpha_0 + 6 + \frac{\eta_2}{2})} \tau^3
+ C \varepsilon^{-(2\rho_2 + 2 + \frac{\eta_2}{2}) }\tau^2 \notag
\\
\le \, & C\|e^{n+1}\|_{-1}^2 + \frac{1}{16} \varepsilon^{\eta_2}\|\nabla e^{n+1}\|^2
+  C \kappa_0^2 \varepsilon^{-(2\alpha_0 + 6 + \frac{\eta_2}{2}) }\tau^3
+ C \varepsilon^{-(2\rho_2 + 2 + \frac{\eta_2}{2}) }\tau^2 . \notag
\end{align}
In order to estimate the term $T_4$, we divided it into two parts as
\begin{align}\label{err-hm1-T4}
T_4 = \, & \frac{1}{2\varepsilon} \big(g(u^n), e^{n+1}\big) \big(g(u^n), u^{n+1} - u^n \big) \\
= \, & - \frac{1}{2\varepsilon} \big(g(u^n), e^{n+1}\big) \big(g(u^n), e^{n+1} - e^n \big) \notag\\
& + \frac{1}{2\varepsilon} \big(g(u^n), e^{n+1}\big) \big(g(u^n), u(t_{n+1}) - u(t_n) \big) \notag\\
=:\, & T_{41} + T_{42}. \notag 
\end{align}
Similarly to the estimate of $T_1$, the term $T_{41}$ can be bounded by
\begin{align}\label{err-hm1-T41}
T_{41} = \, & - \frac{1}{2\varepsilon} \big(g(u^n), e^{n+1}\big) \big(g(u^n), e^{n+1} - e^n \big) \\
\le \, & C \varepsilon^{-1} \|g(u^n)\|^2 \|e^{n+1}\| \|e^{n+1} - e^n\|
\notag\\
\le \, & C\varepsilon^{-1}  \|e^{n+1} - e^n\|_{-1}^{\frac12} \|\nabla e^{n+1} - \nabla e^n\|^{\frac12} \|e^{n+1}\|_{-1}^{\frac12} \|\nabla e^{n+1}\|^{\frac12} \notag\\
\le \, &  \tau^{-\frac12} \|e^{n+1} - e^n\|_{-1}\|e^{n+1}\|_{-1} 
+ C \varepsilon^{-2}\tau^{\frac12} \|\nabla e^{n+1} - \nabla e^n\|\|\nabla e^{n+1}\| \notag\\
\le \, &  \frac12 \gamma_0\tau^{-1} \|e^{n+1} - e^n\|_{-1}^2 
+ C \frac{1}{\gamma_0} \|e^{n+1}\|_{-1}^2 + \frac{1}{16} \varepsilon^{\eta_2} \|\nabla e^{n+1}\|^2
+ T_1^*, \notag
\end{align}
which together with the condition $\tau \le \Tilde C_1 \varepsilon^{\eta_2 + 8}$ yields
\begin{align*}
\tau\sum_{n = 0}^N T_1^* 
=  C\varepsilon^{-(\eta_2 + 4)}\tau^2 \sum_{n = 0}^N  \|\nabla e^{n+1} - \nabla e^n\|^2
\le  \frac{1}{16} \varepsilon^4 \tau \sum_{n = 0}^N  \|\nabla e^{n+1} - \nabla e^n\|^2.
\end{align*}
In addition, we obtain
\begin{align}\label{err-hm1-T42}
T_{42} = \, & \frac{1}{2\varepsilon} \big(g(u^n), e^{n+1}\big) \big(g(u^n), u(t_{n+1}) - u(t_n) \big) \\
\le \, & C \varepsilon^{-1} \|\nabla e^{n+1}\|^{\frac12} \|e^{n+1}\|_{-1}^{\frac12} \|u(t_{n+1}) - u(t_n)\| \notag \\
\le \, & \varepsilon^{\frac{\eta_2}{2}}  \|\nabla e^{n+1} \|\|e^{n+1} \|_{-1}
+ C \varepsilon^{- (2 + \frac{\eta_2}{2})}\tau \int_{t_n}^{t_{n+1}}\|u_t\|^2 \, \d s \notag \\
\le \, & C\|e^{n+1}\|_{-1}^2 
+ \frac{1}{16}\varepsilon^{\eta_2}  \|\nabla e^{n+1}\|^2
+  C \varepsilon^{- (2 + \frac{\eta_2}{2})}\tau \int_{t_n}^{t_{n+1}}\|u_t\|^2 \, \d s. \notag  \notag 
\end{align}
For $T_5$, using Cauchy-Schwartz inequality and Poincar\'e's inequality for $e^{n+1} \in L_0^2(\Omega)$, we have
\begin{align}\label{err-hm1-T5}
T_5 = - \big(\mathcal{R}^{n+1}, e^{n+1} \big) 
\le \|\mathcal{R}^{n+1}\|_{H^{-1}}\|e^{n+1}\|_{H^1} 
\le 8\varepsilon^{-\eta_2} \|\mathcal{R}^{n+1}\|_{H^{-1}}^2 + \frac{1}{16} \varepsilon^{\eta_2} \|\nabla e^{n+1}\|^2.  
\end{align}
Combining the estimates \eqref{err-hm1-T1}--\eqref{err-hm1-T5} together with \eqref{err-hm1-2}, taking the summation for $n = 0$ to $N$, and assuming that $\tau \le \Tilde C_1 \varepsilon^{\eta_2 + 8}$, we have
\begin{align}\label{err-hm1-sum1}
\frac12 \|e^{N+1}\|_{-1}^2 &  + \frac12 ( 1 - 2\gamma_0) \sum_{n = 0}^N \|e^{n+1} -  e^n\|_{-1}^2
+ \varepsilon \tau \sum_{n = 0}^N \|\nabla e^{n+1}\|^2 \\
\le \, & \Big[C + (1-\eta) \lambda_0 + \frac{C\eta}{\varepsilon^2 \eta_1}\Big] \tau \sum_{n=0}^N \|e^{n+1}\|_{-1}^2
+ 8\varepsilon^{-\eta_2} \tau \sum_{n = 0}^{N} \|\mathcal{R}^{n+1}\|_{H^{-1}}^2 \notag\\
& + \Big[ (1-\eta)\varepsilon + \frac{\eta \eta_1}{4} + \frac{\varepsilon^{\eta_2}}{2} \Big] \tau \sum_{n=0}^N \|\nabla e^{n+1}\|^2 
+ \frac18 \varepsilon^4 \tau \sum_{n = 0}^N\|\nabla e^{n+1} - \nabla e^n\|^2 \notag\\
& + C\kappa_0^2 \varepsilon^{- (2\alpha_0 + 6 + d + \frac{\eta_2}{2})} \tau^{3 -\frac{d}{4}} 
+  C \kappa_0^2 \varepsilon^{-(2\alpha_0 + 6 + \frac{\eta_2}{2}) }\tau^3 \notag\\
& +  C \varepsilon^{-(2\rho_2 + 2 + \frac{\eta_2}{2}) }\tau^2 
+ C \varepsilon^{- (2 + \frac{\eta_2}{2})} \tau^2  \int_{0}^{T}\|u_t\|^2 \, \d s. \notag
\end{align}
By taking $\eta = \varepsilon^3$, $\eta_1 = \varepsilon$ and $\eta_2 = 4$, we have
\begin{align}
(1-\eta) \varepsilon + \frac{\eta \eta_1}{4} + \frac{\varepsilon^{\eta_2}}{2} = \varepsilon - \frac14 \varepsilon^4, 
\end{align}
which together with \eqref{sta-u-d} gives
\begin{align}\label{err-hm1-sum2}
\|e^{N+1}\|_{-1}^2 &  +  (1 - 2\gamma_0) \sum_{n = 0}^N \|e^{n+1} -  e^n\|_{-1}^2
+ \frac12 \varepsilon^4 \tau \sum_{n = 0}^N \|\nabla e^{n+1}\|^2 \\
\le \, & C\tau \sum_{n=0}^N \|e^{n+1}\|_{-1}^2 
+  \frac14 \tau \varepsilon^4 \sum_{n = 0}^N  \|\nabla e^{n+1} - \nabla e^n\|^2
+ C\varepsilon^{-4} \tau \sum_{n = 0}^{N} \|\mathcal{R}^{n+1}\|_{H^{-1}}^2 
\notag\\
& +   C\kappa_0^2 \varepsilon^{- (2\alpha_0 + 8 + d )} \tau^{3 -\frac{d}{4}} 
+ C \kappa_0^2 \varepsilon^{-(2\alpha_0 + 8) }\tau^3 \notag\\
& +  C\varepsilon^{-(2\rho_2 + 4) }\tau^2 
+ C\varepsilon^{-(\rho_2 + 4) }\tau^2 .
\notag
\end{align}

{\em Step 4: Completion of the proof.}
We now conclude the proof by the following induction argument which is based on the results from Step 1 to Step 3.
By multiplying $\tau \varepsilon^4$ on both sides of \eqref{err-h1-result}, combining the estimate \eqref{err-hm1-sum2}, and together with Lemma~\ref{lem-consistency-Rn}, we obtain
\begin{align}\label{err-hm1-plus-h1}
\|e^{N+1}\|_{-1}^2 & + \tau\varepsilon^4\|\nabla e^{N+1}\|^2  
+ (1 -  C \tau -  6\gamma_0)\sum_{n = 0}^N \|e^{n+1} -  e^n\|_{-1}^2 
\\
& +  (\frac34 - 6\gamma_0)\tau \varepsilon^4 \sum_{n = 0}^N \|\nabla  e^{n+1} - \nabla  e^n\|^2 \notag 
+ \frac12 \varepsilon^4 \tau \sum_{n = 0}^N \|\nabla e^{n+1}\|^2 \\
\le \, & C_0\tau \sum_{n=0}^N \Big(\|e^n\|_{-1}^2 + \tau\varepsilon^4 \|\nabla e^n\|^2 \Big)
+   C_1\kappa_0^2 \varepsilon^{- (2\alpha_0 + 8 + d )} \tau^{3 -\frac{d}{4}} 
\\
& +  C_2 \kappa_0^2 \varepsilon^{-(2\alpha_0 + 8) }\tau^3
+  C_3\kappa_0^2 \varepsilon^{- (2\alpha_0 + 2)}\tau^{\frac72} 
+ C_4\varepsilon^{-\max\{ \rho_1 + 3, \, 2\rho_2 + 4, \, \rho_2 + 6, \, \rho_3 + 4\} }\tau^2. 
\notag 
\end{align}
in which the term  $\kappa_0' \varepsilon^{- \max\{\rho_1 + 3, \,\rho_2+3, \,\rho_3 +1\}} \tau^2$ is absorbed in $ C_4\varepsilon^{-\max\{  \rho_1 + 3, \, 2\rho_2 + 4, \, \rho_2 + 6, \, \rho_3 + 4\} }\tau^2$.

Suppose that for sufficiently small constant $\gamma_0$ satisfying $\frac34 - 6\gamma_0 \ge \frac12$ and sufficiently small $\tau$ satisfying
\begin{align}
\tau \le\Big( \frac{C_4}{C_1} \kappa_0^{-2} \Big)^{\frac{4}{4-d}} \varepsilon^{\frac{4\alpha_0 + 32 + 4d}{4-d}},
\quad \tau \le \Big( \frac{C_4}{C_2} \kappa_0^{-2} \Big) \varepsilon^{\alpha_0 + 8}, 
\quad \tau \le \Big( \frac{C_4}{C_3} \kappa_0^{-2}\Big)^{\frac23} \varepsilon^{\frac{ (2\alpha_0 + 4)}{3}},
\end{align}
then, by denoting $\alpha_0 := \max\{ \rho_1 + 3, \, 2\rho_2 + 4, \, \rho_2 + 6, \, \rho_3 + 4\}$, 
we derive
\begin{align}
\|e^{N+1}\|_{-1}^2 & + \tau\varepsilon^4\|\nabla e^{N+1}\|^2  
+ \frac12\sum_{n = 0}^N \|e^{n+1} -  e^n\|_{-1}^2 
+ \frac12 \varepsilon^4  \sum_{n = 0}^N \tau\|\nabla e^{n+1}\|^2  \\
& +  \frac12\tau\varepsilon^4 \sum_{n = 0}^N \|\nabla  e^{n+1} - \nabla  e^n\|^2  \le \, C_0\tau \sum_{n=0}^N \Big(\|e^n\|_{-1}^2 + \tau\varepsilon^4 \|\nabla e^n\|^2 \Big) 
+ 4 C_4\varepsilon^{- \alpha_0 }\tau^2. \notag
\end{align}
We denote $\kappa_0 := 4C_4 e^{(C_0 T)}$ and use the Gronwall's inequality to get
\begin{align}\label{err-hm1-sum3}
\|e^{N+1}\|_{-1}^2 & + \frac12\sum_{n = 0}^N \|e^{n+1} -  e^n\|_{-1}^2 + \frac12 \varepsilon^4 \sum_{n = 0}^N \tau \|\nabla e^{n+1}\|^2 
+ \tau\varepsilon^4\|\nabla e^{N+1}\|^2  \\
& +  \frac12\tau\varepsilon^4 \sum_{n = 0}^m \|\nabla  e^{n+1} - \nabla  e^n\|^2 
\le \,   4C_4 e^{(C_0 T)} \varepsilon^{-\alpha_0} \tau^2 
= \kappa_0  \varepsilon^{-\alpha_0} \tau^2. \notag
\end{align}
The induction is completed.

In the above proof, we have used these conditions:
\begin{align}
\tau \le \Tilde C_1 \varepsilon^{12},
\quad  \tau \le \Tilde C_2 \varepsilon^{\frac{4\alpha_0 + 32 + 4d}{4-d}},
\quad \tau \le \Tilde C_3\varepsilon^{\alpha_0 + 8}, 
\quad \tau \le \Tilde C_4 \varepsilon^{\frac{ (2\alpha_0 + 4)}{3}},
\end{align}
where 
$\Tilde C_2 = \Big( \frac{C_4}{C_1} \kappa_0^{-2} \Big)^{\frac{4}{4-d}}$, 
$\Tilde C_3 = \Big( \frac{C_4}{C_2} \kappa_0^{-2} \Big)$, and 
$\Tilde C_4 = \Big( \frac{C_4}{C_3} \kappa_0^{-2}\Big)^{\frac23}$. 
By denoting 
\begin{equation}
\beta_0 = \frac{4\alpha_0 + 32 + 4d}{4-d}
\qquad \mbox{and} \qquad
\Tilde C  = \min \{\Tilde C_1, \Tilde C_2, \Tilde C_3, \Tilde C_4  \}, 
\end{equation}
we specify the final condition on $\tau$, that is, $\tau \le \Tilde C  \varepsilon^{
\beta_0}$.

\end{proof}

\section{Numerical experiments}\label{sec-5}
In this section, we present a two-dimensional numerical test to validate the theoretical results on
the energy decay properties proved in Theorem \ref{THM:Energy-decay}, as well as the convergence rates of the proposed method given in Theorem \ref{THM:err-est}. All the computations are 
performed using the software package NGSolve (\url{https://ngsolve.org}).

We solve the Cahn-Hilliard equation \eqref{pde-CH} on the two-dimensional square $\Omega = [0, 1] \times [0, 1]$ under Neumann boundary conditions by using the proposed scheme \eqref{SAV-Euler-v1} with 
the following initial condition
\begin{align}
u_0(x, y) = &\tanh \big(((x - 0.65)^2 + (y - 0.5)^2 - 0.1^2)/\varepsilon \big)\\
&\times \tanh \big(((x - 0.35)^2 + (y - 0.5)^2 - 0.125^2)/\varepsilon \big), \notag
\end{align}
where $\tanh(x) := (e^x - e^{-x})/(e^x + e^{-x})$. This type of initial condition is also adopted in \cite{feng2016analysis, feng2008posteriori}, where the set of the zero-level of the initial function $u_0(x, y)$ encloses two circles of radius $0.1$ and $0.125$, respectively. 

To obtain a $C^4$ potential function $F(v)$ that satisfies the assumption~\ref{ass-f}, we modify the common double-well potential $F(v) = \frac14 (v^2 - 1)^2$ by setting $M = 2$ in \eqref{truncate-F} to get a cut-off function $\hat F(v) \in C^4(\R) $. Correspondingly, the ninth-order polynomials $\Phi_+(v)$ and $\Phi_-(v)$ in \eqref{truncate-F} are determined with the following conditions
\begin{equation} 
\left \{
\begin{aligned}
&\Phi_+^{(i)}(M) = F^{(i)}(M)  \quad \mbox{and} \quad  \Phi_-^{(i)}(-M) = F^{(i)}(-M) \qquad \mbox{for} \,\,\, i = 0, 1, 2, 3, 4,\\
&\Phi_+(2M) = \Phi_-(-2M) = \frac14  ((2M)^2 - 1)^2, \\
&\Phi_+^{(1)}(2M) = \Phi_-^{(1)}(-2M) = (2M)^3 - 2M,  \\
&\Phi_+^{(i)}(2M) = \Phi_-^{(i)}(-2M) = 0   \qquad \mbox{for} \,\,\, i = 2, 3, 4.
\end{aligned}
\right .
\end{equation} 
Note that the truncation point $M = 2$ used here are for convenience only. 
For simplicity, we still denote the modified function $\hat F(u)$ by $F(u)$.

The spatial discretization is done by using the Galerkin finite element method. Let $S_h$ denotes the $P_s$ conforming finite element space defined by
\begin{align*}
S_h := \{ v_h \in C(\bar \Omega); \, \, v_h|_{K} \in P_s(K), \, \,\,  \forall\, K \in \mathcal{T}_h\},
\end{align*}
where $\mathcal{T}_h$ is a quasi-uniform triangulation of $\Omega$.
We introduce space notation $S_h^{\circ} := \{ v_h \in S_h ; \, \, (v_h, 1) = 0 \}$, and define the discrete inverse Laplace operator $- \Delta_h^{-1} : L_0^2(\Omega) \to S_h^{\circ} $ such that
\begin{align}
\big(\nabla (- \Delta_h^{-1} ) v, \nabla \eta_h \big) = (v, \eta_h) \qquad
\forall \, \, \, \eta_h \in S_h.
\end{align}
Since the exact solution of the considered problem is not known, we compute the orders of convergence by the formula
$$
\mbox{order of convergence} = 
\log\Bigg(\frac{\|u_N^{(\tau)}-u_N^{(\tau/2)}\|_{h, -1}}{\|u_N^{(\tau/2)}-u_N^{(\tau/4)}\|_{h, -1}}\Bigg)/\log(2) 
$$
based on the finest three meshes, where $u_N^{(\tau)}$ denotes the numerical solution at $t_N=T$ computed by using a stepsize $\tau$, and 
$\|v\|_{h, -1} := \sqrt{(v, - \Delta_h^{-1} v)}$ for $v \in L_0^2(\Omega)$.

The time discretization errors in $\|\cdot\|_{h, -1}$-norm  are presented in Figure \ref{fig_SAV_energy} (left) for four different $\varepsilon = 0.08, 0.06, 0.05, 0.04$ at $T = 0.005$, where we have used finite elements of degree $s = 3$ with a sufficiently spatial mesh $h = 1/64$ so that the error from spatial discretization is negligibly small in observing the temporal convergence rates. 
From  Figure \ref{fig_SAV_energy} (left),  we see that the error of time discretization  is $O(\tau)$, which is consistent with the theoretical results proved in Theorem \ref{THM:err-est}.  
In addition, Figure \ref{fig_SAV_energy} (right) shows the evolution in time of the discrete SAV energy   for four different $\varepsilon$, which should be
decreasing according to Theorem \ref{THM:Energy-decay}. This graph clearly confirms this decay property. 
Therefore, the numerical experiments are in accordance with our theoretical results. 
\begin{figure}[htp]
\centerline{
\includegraphics[width=3.2in]{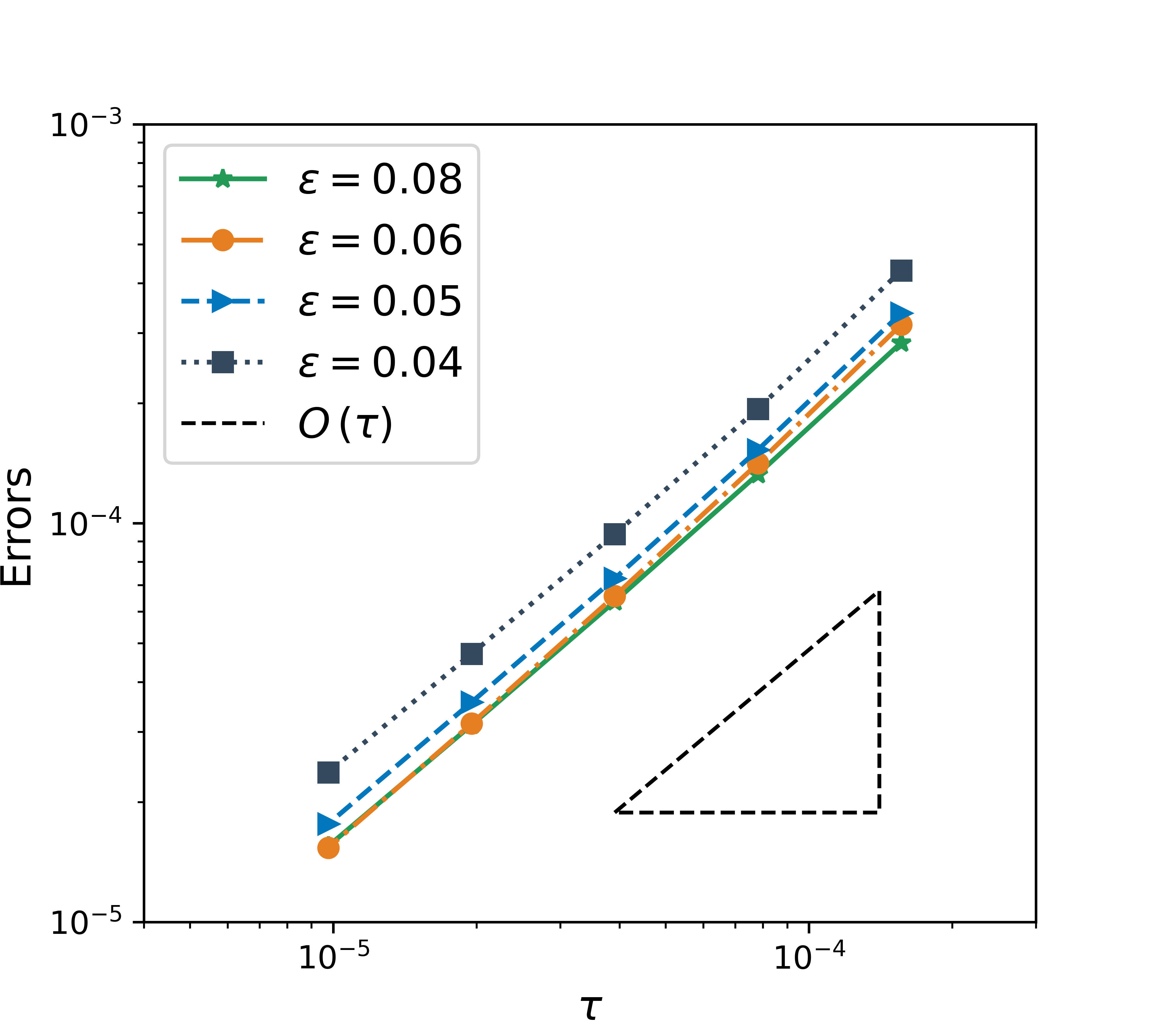}\hspace{-8pt}
\includegraphics[width=3.2in]{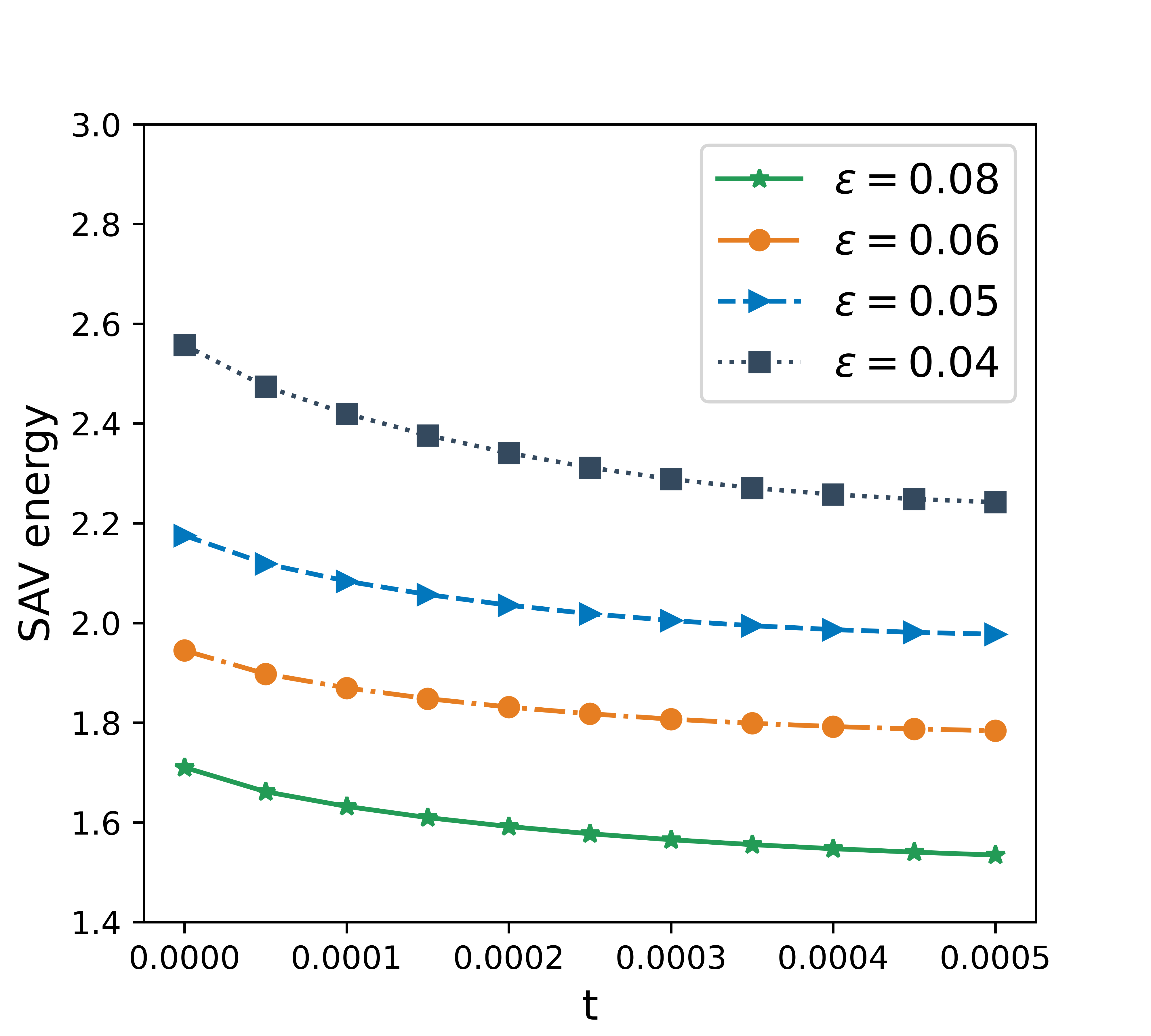}
}
\vspace{-8pt}
\caption{(left) Time discretization errors; \,\, (right) Evolution of the SAV energy.}
\label{fig_SAV_energy}
\end{figure}

Figure \ref{fig_zero_level} shows snapshots of the numerical interface for four different $\varepsilon=0.08, 0.06, 0.05, 0.04$ at six fixed time points. They clearly indicate that at each time point, as $\varepsilon$ tends to zero, the numerical interface converges to the sharp interface of the Hele-haw flow, which is consistent with the phenomenon stated in \cite{feng2016analysis, feng2008posteriori}.
It also shows that for larger $\varepsilon$, the numerical interface evolves faster in time.
%

%

\begin{figure}[htp]
\centerline{
\includegraphics[width=3.2in]{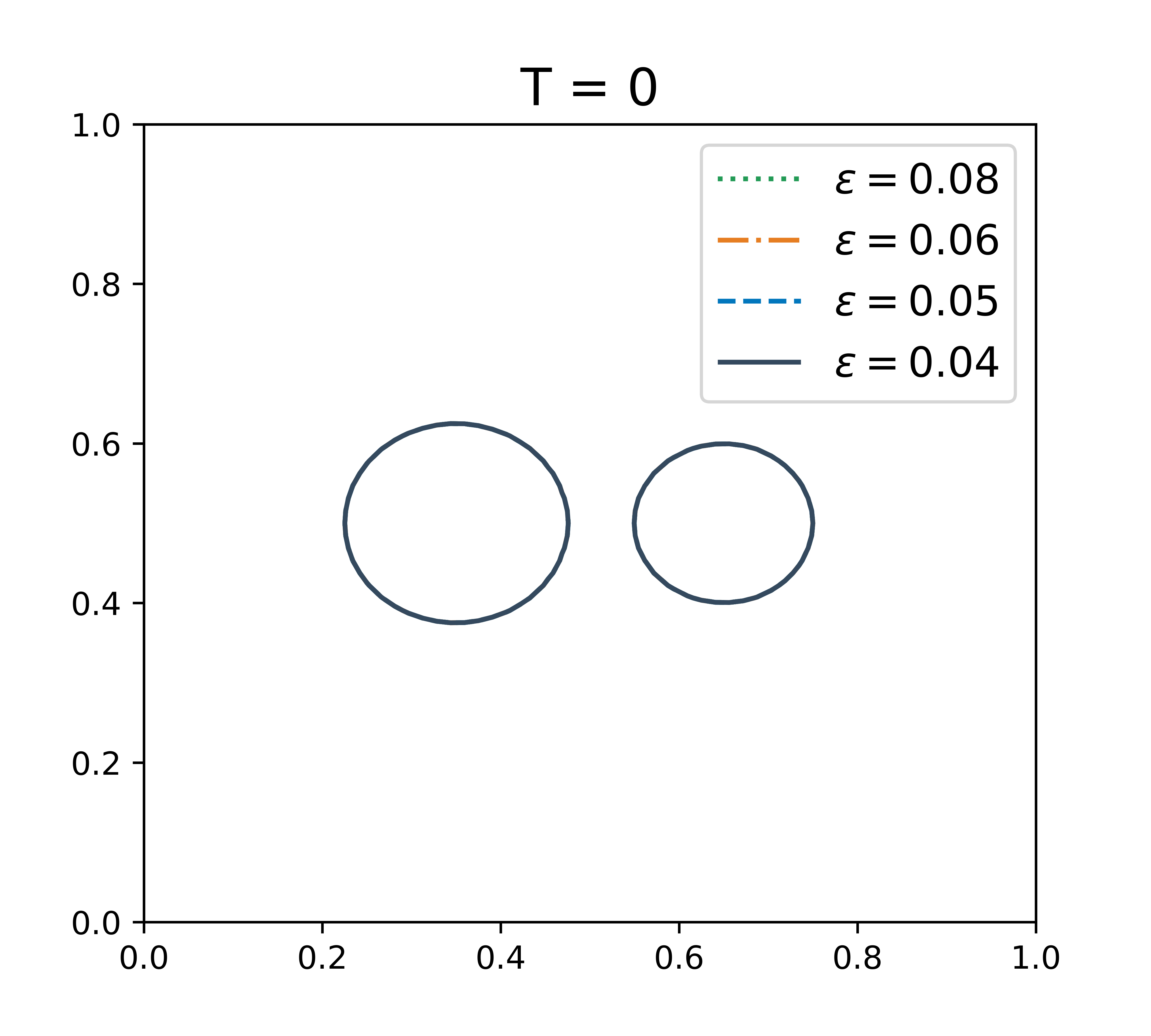}\hspace{-8pt}
\includegraphics[width=3.2in]{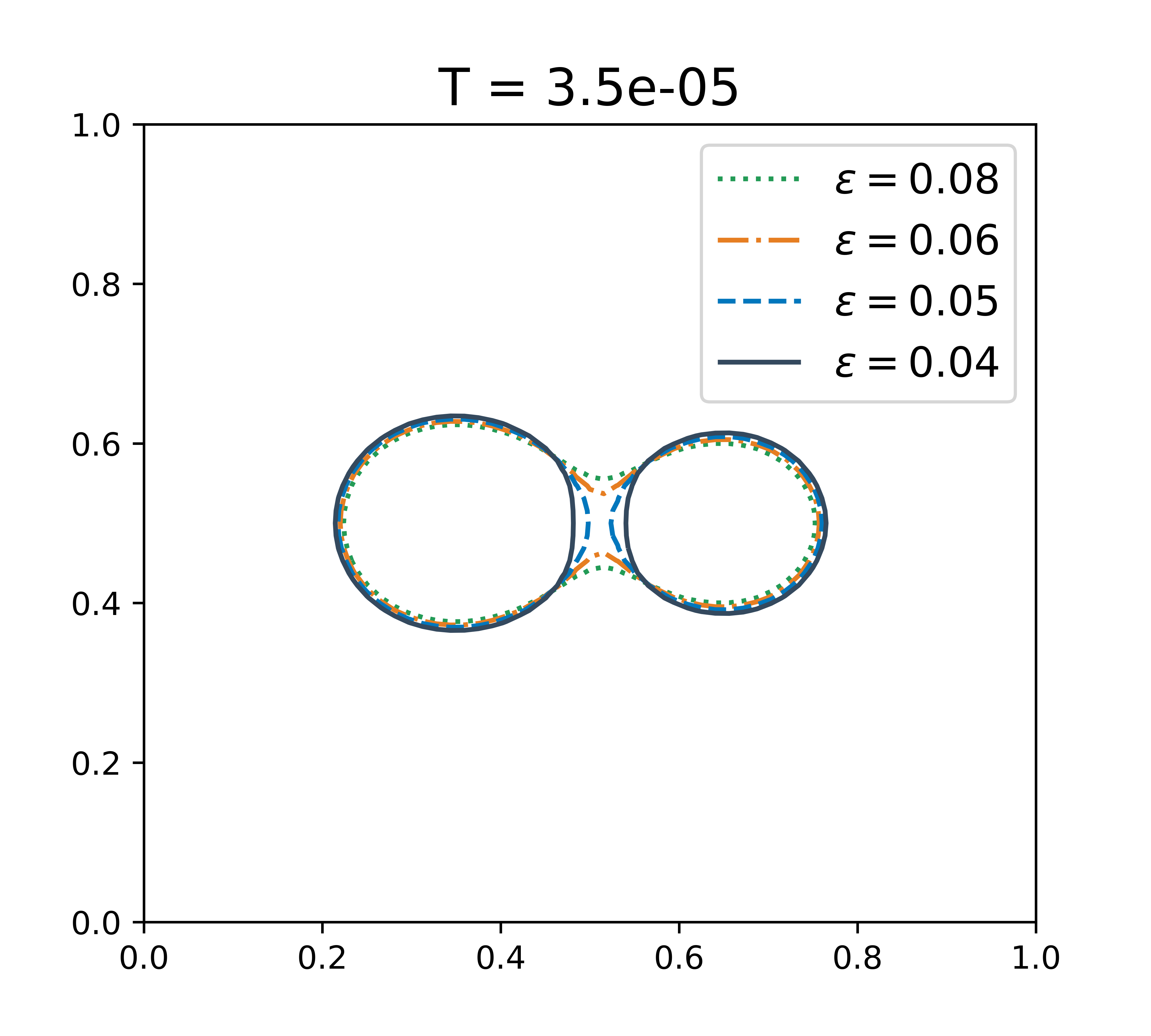}
}
\vspace{-10pt}
\centerline{
\includegraphics[width=3.2in]{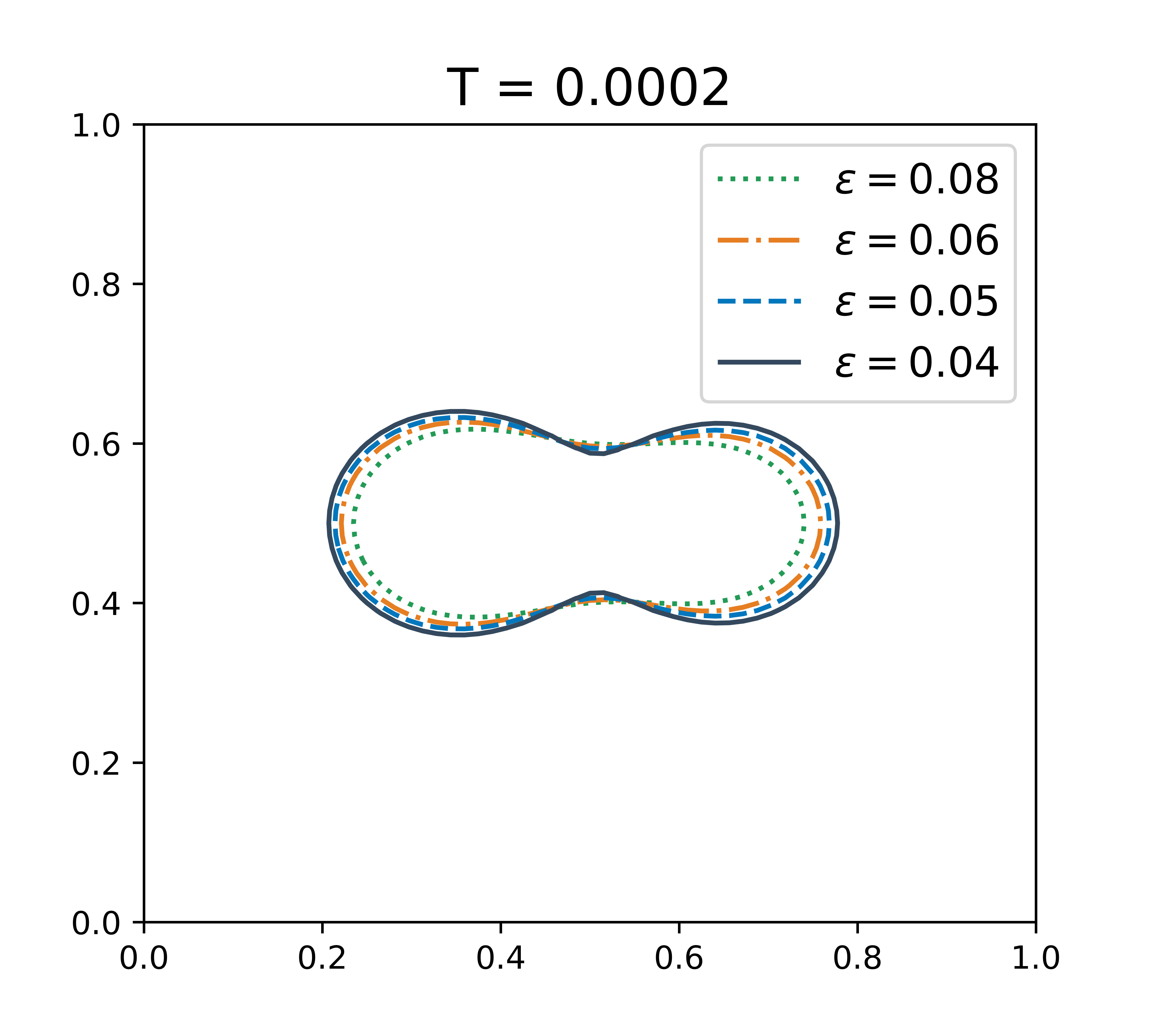}\hspace{-8pt}
\includegraphics[width=3.2in]{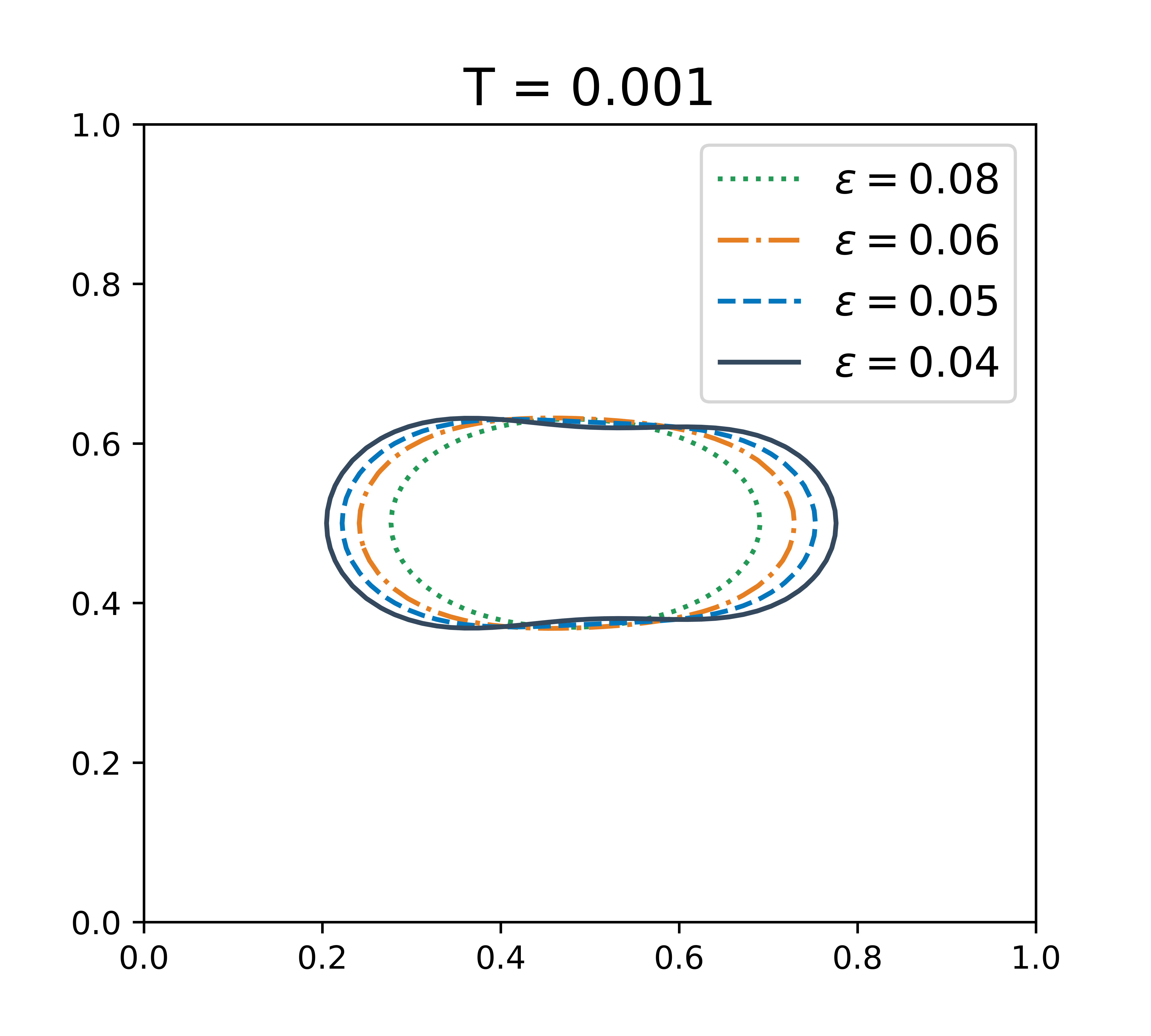}
}
\vspace{-10pt}
\centerline{
\includegraphics[width=3.2in]{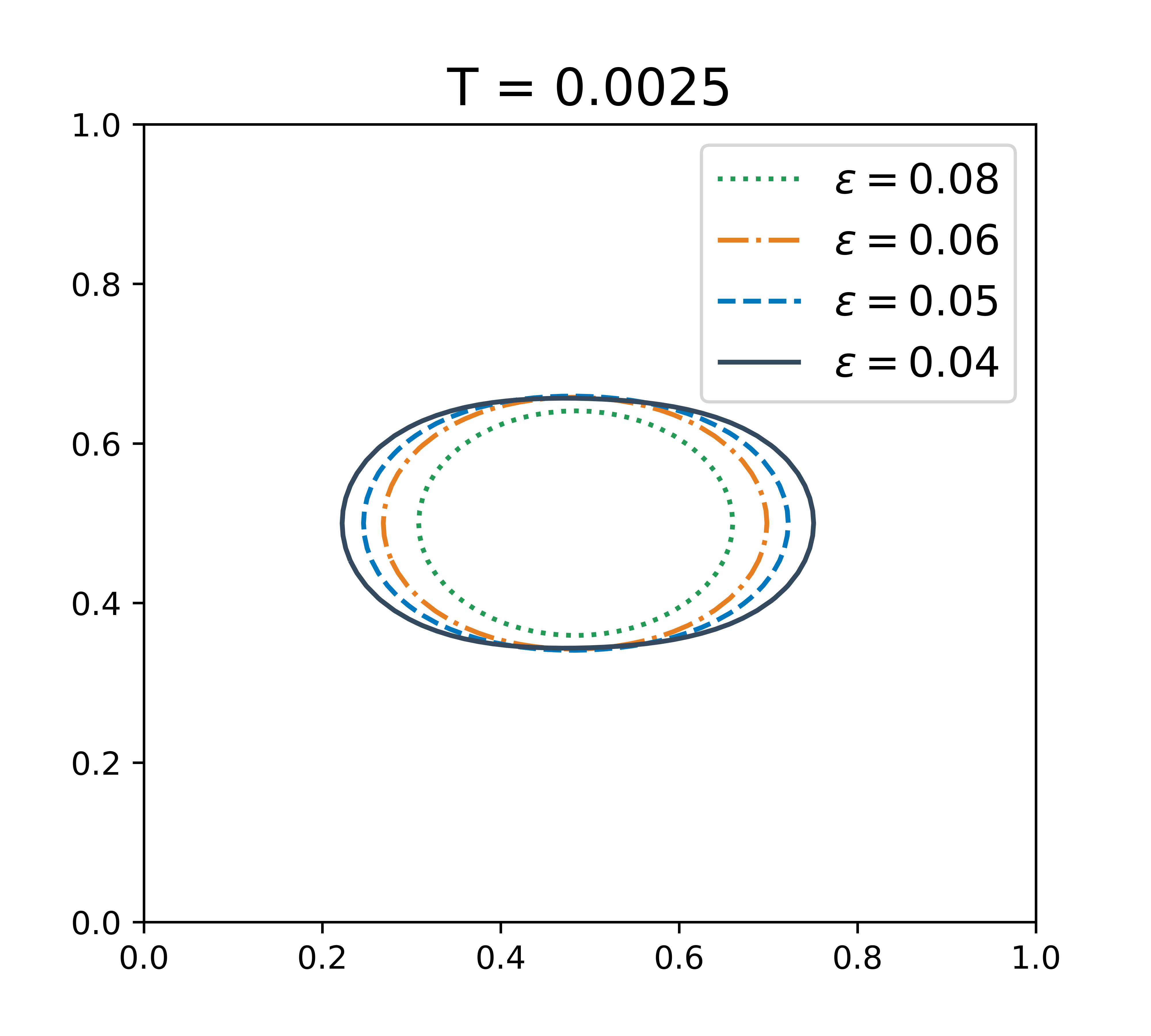}\hspace{-8pt}
\includegraphics[width=3.2in]{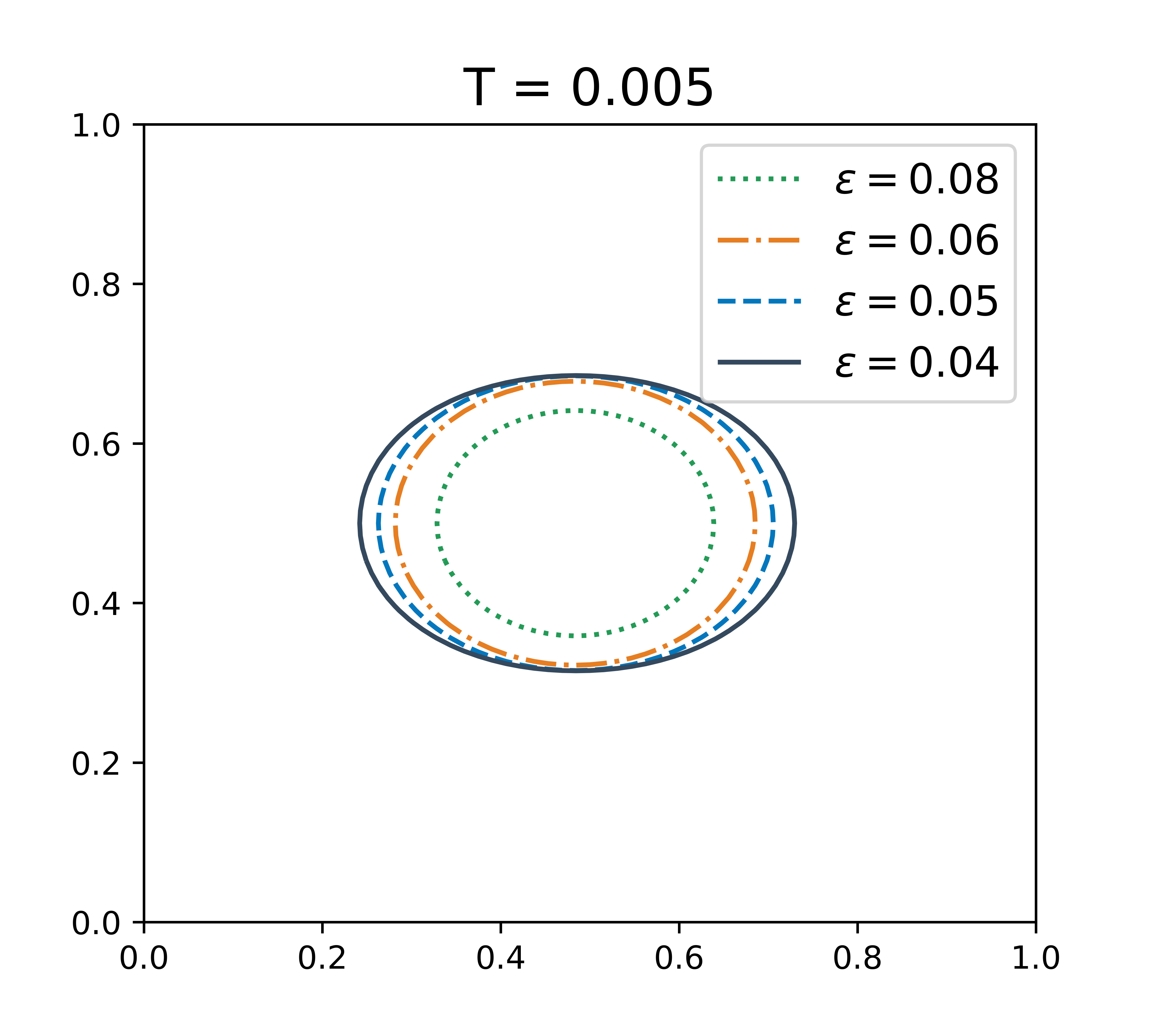}
}
\vspace{-8pt}
\caption{Snapshots of the zero-level sets of the numerical solutions.}
\label{fig_zero_level}
\end{figure}

\bigskip
\bigskip

\noindent \textit{Acknowledgements.} The work of Shu Ma was partially supported by the Research Grants Council of the Hong Kong Special Administrative Region, China. (Project Nos. CityU 11302718, CityU 11300621). 
The work of Weifeng Qiu was partially supported by the Research Grants Council of the Hong Kong Special Administrative Region, China. (Project Nos. CityU 11302718, CityU 11300621). All authors contribute equally. The second author is the corresponding author.

\bibliographystyle{abbrv}

\bibliography{CH-reference}

\end{document}